\numberwithin{equation}{section}
\newtheorem{theorem}[equation]{Theorem}
\newtheorem{corollary}[equation]{Corollary}
\newtheorem{prop}[equation]{Proposition}
\newtheorem{lemma}[equation]{Lemma}
\newtheorem{example}[equation]{Example}
\theoremstyle{remark}
\newtheorem{remark}[equation]{Remark}
\newcommand{\M}{\mathfrak M}
\newcommand{\F}{\mathbb F}
\newcommand{\Q}{\mathbb Q}
\newcommand{\Or}{\mathcal{O}}
\newcommand{\Z}{\mathbb Z}
\newcommand{\p}{\mathfrak{p}}
\newcommand{\sidesim}{\begin{sideways}%
      $\sim$\end{sideways}}
\DeclareMathOperator{\Id}{Id}
\DeclareMathOperator{\nrd}{nrd}
\DeclareMathOperator{\trd}{trd}
\DeclareMathOperator{\nr}{nm}
\DeclareMathOperator{\End}{End}
\DeclareMathOperator{\opchar}{char}
\DeclareMathOperator{\rad}{rad}
\DeclareMathOperator{\Sym}{sym}
\def\det{\operatorname{det}}
\def\M{\operatorname{M}}
\def\det{\operatorname{det}}
\def\GL{\operatorname{GL}}
\def\PGL{\operatorname{PGL}}
\def\M{\operatorname{M}}
\begin{document}

\title{Metacommutation of primes in Eichler orders}
\author{Angelica Babei and Sara Chari}
\address{Department of Mathematics, Vanderbilt University, 1326 Stevenson Center, Station B 407807, Nashville, TN 37240}
\email{angelica.babei@vanderbilt.edu}
\address{Department of Mathematics, Bates College, Hathorn Hall, Lewiston, ME 04240}
\email{schari@bates.edu}
\keywords{Eichler orders, metacommutation, Bruhat-Tits tree, quaternion algebra.}
\thanks{The authors would like to thank John Voight for helpful suggestions on how to improve the results in this article.}

\maketitle

\begin{abstract}

In this article, we study the metacommutation problem in locally Eichler orders. From this arises a permutation of the set of locally principal left ideals of a given prime reduced norm. Previous results on the cycle structure were determined for locally maximal orders. As we extend these results, we present an alternative, combinatorial description of the metacommutation permutation as an action on the Bruhat-Tits tree.
\end{abstract}


\section{Introduction}

In this article, we study the metacommutation problem, following work of Conway--Smith \cite{ConwaySmith}, Cohn--Kumar \cite{CohnKumar}, Forsyth--Gurev--Shrima \cite{FGS}, and Chari \cite{SC}. Conway--Smith \cite{ConwaySmith} first proposed the metacommutation problem in the Hurwitz quaternion order $$\Or \colonequals \Z+\Z i + \Z j + \Z \frac{-1+i+j+k}{2}.$$  The problem is as follows. If $\pi$ and $\omega \in \Or$ have distinct prime reduced norms $\nrd(\pi)=p$ and $\nrd(\omega)=q$, we may factor $\pi \omega=\omega'\pi'$, with $\nrd(\pi')=p$ and $\nrd(\omega')=q$, and the choice of $\pi'$ is unique up to left multiplication by units. Define $\sigma_\omega(\pi)=\pi'$ if there is an element $\omega'$ with $\nrd(\omega')=\omega$ such that $$\pi\omega=\omega'\pi'.$$ This is a permutation and is well-defined because of the two unique ways to factor $\pi\omega$ up to units. The study of this occurrence is called the \emph{metacommutation problem}.

The metacommutation problem was first studied by Cohn--Kumar \cite{CohnKumar}. They determined the number of fixed points and the sign of a permutation induced by an element $\omega \in \Or$ and partial results on the cycle structure. Forsyth--Gurev--Shrima \cite{FGS} then viewed the problem as an action of matrix groups on projective space $\mathbb{P}^1(\F_p)$ and showed that in the Hurwitz order, all cycles that are not fixed points have the same length. Chari \cite{SC} generalized the definition of the metacommutation permutation to orders in central simple algebras of arbitrary degree and gave results on the cycle structure in the case where the completion of the order is maximal, using results of Fripertinger \cite{HF}.  In particular, $\sigma_\omega$ is interpreted as a permutation on the set of left ideals of reduced norm $\p$.

In this article, we extend results in the quaternion case to allow the completion to be an Eichler order. Let $R$ be a complete discrete valuation ring with field of fractions $F$ and maximal prime ideal $\p$ generated by an element $p \in R$, and let $\Or=\M_2(R) \cap \gamma^{-1}\M_2(R) \gamma$ be a (local) Eichler $R$-order in the quaternion algebra $B\colonequals \M_2(F)$, where we define $\gamma=\left(
\begin{array}{cc}
0 & 1\\
p^n & 0
\end{array} \right)$. We restrict to permuting the set $\Id(\Or;\p)$ of \emph{principal} left ideals of reduced norm $\p$.  Each ideal  corresponds to a segment in the Bruhat-Tits tree for $\GL_2(F)$, and we interpret $\sigma_\omega$ as an action of $\Or^\times$ on the set of segments associated to $\Id(\Or;\p)$. We use this action to partition $\Id(\Or;\p)$ into two sets, and describe the restriction of $\sigma_\omega$ on each set individually.  

To describe $\sigma_\omega$ further, we define $\Id(\Or;\p)'$ and $\Id(\M_2(R);\p)'$ to be the set of left ideals of $\Or$ and $\M_2(R)$, respectively, having reduced norm $\p$, and omitting one specific ideal to be defined later. For $\omega \in \Or^\times$, define $\sigma_\omega \in \Sym(\Id(\Or;\p))$ by $\sigma_\omega(P)=P\omega$ and define $\tau_\omega \in \Sym(\Id(\M_2(R);\p))$ by $\tau_\omega(P)=P\omega$. Our main result is the following theorem.



\begin{theorem}\label{thm1}
For $\omega \in \Or^\times$, there is a partition $\Id(\Or;\p)'=S_1 \sqcup S_2$ such that $\sigma_\omega|_{S_i}$ permutes $S_i$, and there are bijections $\varphi \colon S_1 \rightarrow \Id(\M_2(R);\p)'$ and $\phi_\gamma \colon S_2 \rightarrow S_1$ such that the following diagrams commute.
\begin{enumerate}

\item[\rm{a.}] $$\xymatrix{
S_2 \ar^{\sigma_\omega|_{S_2}\hspace{.1in}}[r]\ar_{\phi_{\gamma}}@{^{}->}[d]&{S_2}\ar^{\phi_{\gamma}}[d]\\
{S_1}\ar^{\sigma_{\gamma^{-1}\omega \gamma}|_{S_1}\hspace{0in}}[r]&{S_1}
}$$

\item[\rm{b.}] $$\xymatrix{
S_1 \ar^{\sigma_\omega|_{S_1}\hspace{.1in}}[r]\ar_{\varphi}@{^{}->}[d]&{S_1}\ar^{\varphi}[d]\\
{\Id(\M_2(R);\p)'}\ar^{\tau_\omega\hspace{0in}}[r]&{\Id(\M_2(R);\p)'}
}$$

\item[\rm{c.}] $$ \xymatrix{
S_2 \ar^{\sigma_\omega|_{S_2}\hspace{.1in}}[r]\ar_{\varphi \circ \phi_{\gamma}}@{^{}->}[d]&{S_2}\ar^{\varphi \circ \phi_{\gamma}}[d]\\
{\Id(\M_2(R);\p)'}\ar^{\tau_{\gamma^{-1}\omega\gamma}\hspace{0in}}[r]&{\Id(\M_2(R);\p)'}
}$$

\end{enumerate}

\end{theorem}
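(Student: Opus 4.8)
The plan is to reduce all three squares to one algebraic identity, together with the combinatorial dictionary --- developed before the statement, extending the maximal-order analysis of Chari \cite{SC} and Fripertinger \cite{HF} --- between $\Id(\Or;\p)$ and length-one segments in the Bruhat--Tits tree $\mathcal{T}$ of $\GL_2(F)$. The key preliminary fact I would record is that $\gamma$ normalizes $\Or$: since $\gamma^2 = p^n I$ is central, conjugation by $\gamma$ equals conjugation by $\gamma^{-1}$, so $\gamma^{-1}\Or\gamma = \gamma^{-1}\M_2(R)\gamma \cap \gamma^{-2}\M_2(R)\gamma^2 = \gamma^{-1}\M_2(R)\gamma \cap \M_2(R) = \Or$. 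Thus $c_\gamma\colon x \mapsto \gamma^{-1}x\gamma$ is a ring automorphism of $\Or$ carrying $\Or^\times$ onto $\Or^\times$; it is an involution on the set of left $\Or$-ideals (as $\gamma^2$ is central); and on $\mathcal{T}$ it interchanges the two vertices $v_0 = [R^2]$ and $v_n$ whose stabilizers are the maximal orders $\M_2(R)$ and $\gamma^{-1}\M_2(R)\gamma$ containing $\Or$, hence it reverses the geodesic $\mathfrak s = [v_0,v_n]$. On the other hand $\Or^\times = \M_2(R)^\times \cap \gamma^{-1}\M_2(R)^\times\gamma$ fixes $v_0$ and $v_n$, hence fixes $\mathfrak s$ pointwise.

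Next I would make the partition and auxiliary maps explicit via the dictionary. Write $\Id(\Or;\p) = S_1 \sqcup S_2 \sqcup \{P_0\}$, where $P_0$ is the distinguished ideal whose segment lies along $\mathfrak s$ (this is the ideal omitted from $\Id(\Or;\p)'$), and $S_1$ (resp.\ $S_2$) consists of those ideals whose segment sprouts from the $v_0$-side (resp.\ $v_n$-side) of $\mathfrak s$. Since $\Or^\times$ fixes $\mathfrak s$ pointwise, right multiplication $\sigma_\omega(P) = P\omega$ by $\omega \in \Or^\times$ cannot move a segment from one side of $\mathfrak s$ to the other, so $\sigma_\omega$ restricts to each $S_i$ and fixes $P_0$. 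Define $\varphi\colon S_1 \to \Id(\M_2(R);\p)'$ by $P \mapsto \M_2(R)P$, the left $\M_2(R)$-ideal generated by $P$: this is a proper principal ideal of reduced norm $\p$ (as $\nrd(P) = \p$ is not a unit), it records the neighbor of $v_0$ at which $P$ sprouts, and it is a bijection onto the set of neighbors of $v_0$ other than the $v_1$-direction, which is precisely $\Id(\M_2(R);\p)'$. Define $\phi_\gamma\colon S_2 \to S_1$ by $P \mapsto c_\gamma(P) = \gamma^{-1}P\gamma$; because $c_\gamma$ fixes $\Or$, this is again a principal integral left $\Or$-ideal, of reduced norm $\nrd(\gamma^{-1})\nrd(P)\nrd(\gamma) = \p$, and because $c_\gamma$ reverses $\mathfrak s$ it sprouts from the $v_0$-side, i.e.\ lies in $S_1$; since $c_\gamma$ is an involution, $Q \mapsto \gamma Q\gamma^{-1}$ is its inverse, so $\phi_\gamma$ is a bijection.

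With these definitions the three squares are formal. For (b), for $P \in S_1$ and $\omega \in \Or^\times$,
\[
\varphi(\sigma_\omega(P)) = \M_2(R)(P\omega) = (\M_2(R)P)\omega = \tau_\omega(\varphi(P)).
\]
For (a), since $\gamma^{-1}\omega\gamma \in \Or^\times$,
\[
\phi_\gamma(\sigma_\omega(P)) = \gamma^{-1}(P\omega)\gamma = (\gamma^{-1}P\gamma)(\gamma^{-1}\omega\gamma) = \sigma_{\gamma^{-1}\omega\gamma}(\phi_\gamma(P)),
\]
using $\gamma\gamma^{-1} = I$ in the middle. Finally (c) follows by stacking (a) on top of (b) applied to the unit $\gamma^{-1}\omega\gamma$, together with $\phi_\gamma(S_2) = S_1$:
\[
\varphi\circ\phi_\gamma\circ\sigma_\omega = \varphi\circ\sigma_{\gamma^{-1}\omega\gamma}\circ\phi_\gamma = \tau_{\gamma^{-1}\omega\gamma}\circ\varphi\circ\phi_\gamma .
\]

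The arithmetic above is trivial; all the content sits in three nonformal assertions: that $\sigma_\omega$ preserves the partition, that $\varphi$ is a well-defined bijection onto $\Id(\M_2(R);\p)'$, and that $\phi_\gamma$ lands in $S_1$ and is bijective. I expect the main obstacle to be precisely this geometric bookkeeping for an Eichler, rather than maximal, order: setting up the bijection between $\Id(\Or;\p)$ and segments of $\mathcal{T}$, pinning down both the omitted ideal $P_0$ and the $v_1$-direction ideal removed from $\Id(\M_2(R);\p)$, and verifying that passage to $\M_2(R)$-ideals and conjugation by $\gamma$ interact with the decomposition by side of $\mathfrak s$ as claimed. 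Once that dictionary is in place, no further computation is required.
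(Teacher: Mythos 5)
Your proposal is correct and takes essentially the same route as the paper: the same Bruhat--Tits dictionary (ideals of reduced norm $\p$ correspond to length-$n$ segments adjacent to the segment of $\Or$ --- not ``length-one'' segments as you write), the same observation that $\Or^\times$ fixes that segment pointwise so that $\sigma_\omega$ preserves the left/right partition $S_1\sqcup S_2$, the same maps $\varphi(\Or\alpha)=\M_2(R)\alpha$ and $\phi_\gamma=c_\gamma$ exploiting that $\gamma$ normalizes $\Or$ and reverses the segment, and the same one-line algebraic verifications of the three squares. The only cosmetic differences are that the paper pins down $S_1$ and $S_2$ via explicit generators $\alpha_s=\left(\begin{smallmatrix}1&b_s\\0&p\end{smallmatrix}\right)$ together with the Eichler--Brzezinski count $\#\Id(\Or;\p)\in\{2q,2q+1\}$, and that the distinguished ideal you call $P_0$ (namely $\rad\Or$) exists only when $n=1$.
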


In this way, we may understand metacommutation in an Eichler order $\Or$ by studying the given permutations in the two maximal orders containing $\Or$. More precisely, up to isomorphism, $\Or \subseteq \M_2(R)$, and it suffices to understand the permutation given by metacommutation in $\M_2(R)$ itself. Our methods allow us to interpret the metacommutation problem in terms of an action on the Bruhat-Tits tree and hence give a combinatorial description of the cycle structure of $\sigma_\omega$ .

\section{Metacommutation setup}
\label{eichlersetup}

We now set up the metacommutation problem and state previous results in this section. Let $R$ be a ring whose field of fractions $F$ is a global field. Let $B$ be a quaternion algebra over $F$ and let $\Or \subseteq B$ be a quaternion $R$-order. For a prime $\p \subseteq R$, denote by $\Or_{\p} \colonequals \Or \otimes_R R_\p$ and $B_{\p} \colonequals B \otimes_F F_\p$ the completions of $\Or$ and $B$, respectively, at $\p$. Define $\Id(\Or;\p)$ to be the set of locally principal left ideals of $\Or$ having reduced norm $\p$.  

We first define the permutation of $\Id(\Or;\p)$ following Chari \cite{SC}. For a left ideal $P$ of reduced norm $\p$, and $\omega \in \Or$ with $\p \nmid \nrd(\omega)$, define  
\begin{equation}\label{permutation} 
\sigma_\omega(P)\colonequals P\omega+\Or \p.
\end{equation} 

Now, it suffices to study the corresponding permutation in the completion $\Or_\p$. The map $P \mapsto P_\p \colonequals P \otimes_R R_\p$ is a bijection between $\Id(\Or;\p)$ and the ideals of reduced norm $\p R_{\p}$ in $\Or_{\p}$ by the local-global dictionary for lattices and \cite[Theorem 5.2(iii)]{IR}. Using this correspondence and given $\omega \in \Or$, we have $$\sigma_\omega(P_\p)=P_\p\omega+\Or_\p \p \colonequals (P_\p)'$$ if and only if $$\sigma_\omega(P)=\sigma_\omega(P_\p \cap \Or)= (P_\p \cap \Or) \omega + \Or \p =(P_\p \omega + \Or_\p \p) \cap \Or=(P_\p)'\cap \Or.$$ We then obtain the same permutation of ideals in $\Or$ and in $\Or_\p$, so we may focus our attention on the principal left ideals of $\Or_\p$ of reduced norm $\p R_\p$.

Furthermore, we will restrict to permuting only those left ideals of $\Or_\p$ that are principal. If a left $\Or_\p$-ideal $\Or_\p \alpha$ is principal, then $\sigma_\omega(\Or_\p \alpha)=\Or_\p \alpha \omega+\Or_\p \p=\Or_\p (\alpha \omega)$ is also principal since $\Or \alpha \omega \subseteq \Or_\p \p$. Therefore, we will still obtain a permutation by restricting the permutation to principal left ideals.

\begin{remark}

In $\Or_\p$, if $P_\p=\Or_\p \alpha$ is principal, we may equivalently define $\sigma_\omega(P_\p)=P_\p\omega=\Or_\p \alpha \omega$. 

\end{remark}

\begin{remark}
For any $\omega \in \Or_{(\p)}^\times$, there is an element $a \in R_{(\p)}^\times$ such that $a \omega \in \Or$, and $P_\p (a\omega)=a P_\p \omega=P_\p \omega$. We may therefore define $\sigma_\omega$ for any $\omega \in \Or_{(\p)}^\times$.
\end{remark}



In the case where $\Or_\p$ is maximal, all left ideals of reduced norm $\p \Or_\p$ are principal as shown in Reiner \cite[Theorem 17.3(iii)]{IR}. If $\Or_\p$ is in fact maximal, results on the cycle structure of the metacommutation permutation are given by the following two theorems. 

\begin{theorem}[Chari]\label{thm2} Given $\omega \in \Or_{\p}^\times$. Define 
\begin{align*}
\tau \colon \GL_m(\F_q)&\rightarrow \Sym(\mathbb{P}^{m-1}(\F_q))\\
\tau(Q)(v)&\mapsto Q^{-1}v
\end{align*} and let $\rho \colon \Or_\p \rightarrow \Or_\p/\rad \Or \simeq \GL_m(\F_q)$ for a finite field $\F_q \supseteq R/\p$. Then, $$\Sym(\Id(\Or_\p;\p)) \simeq \Sym(\mathbb{P}^{m-1}(\F_q))$$ and the following diagram commutes.

$$\xymatrix{
{\Or_{\p}^\times}\ar^{\sigma\hspace{.2in}}[r]\ar_{\rho}@{^{}->}[d]&{\Sym(\Id(\Or_\p;\p))}\ar^{\sidesim}[d]\\
{\GL_m(\F_q)}\ar^{\tau\hspace{.2in}}[r]&{\Sym(\mathbb{P}^{m-1}(\F_q))}
}$$

\end{theorem}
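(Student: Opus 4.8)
The plan is to realize $\Id(\Or_\p;\p)$ explicitly as the projective space $\Pj^{m-1}(\F_q)$ and then to compute the permutation $\sigma_\omega$ in those coordinates, where the commutativity of the square becomes a one-line kernel computation. Since $\Or_\p$ is maximal, every left ideal of reduced norm $\p$ is principal by \cite[Theorem 17.3(iii)]{IR}, so I may write each such ideal as $\Or_\p\alpha$ with $\nrd(\alpha)$ a generator of $\p$. In the split case $\Or_\p \simeq \M_m(R)$ (the case of interest; when $B_\p$ is a division algebra one has $m=1$ and the permutation is trivial) we have $\rad\Or_\p = \p\Or_\p \subseteq \Or_\p\alpha$, via the adjugate, so reduction modulo the radical gives the map $\rho\colon \Or_\p \to \Or_\p/\rad\Or_\p \simeq \M_m(\F_q)$ of the statement, carrying $\Or_\p^\times$ onto $\GL_m(\F_q)$.

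First I would construct the bijection. Write $\bar\alpha = \rho(\alpha) \in \M_m(\F_q)$; the condition $\nrd(\alpha) \in \p \setminus \p^2$ forces $\bar\alpha$ to have rank exactly $m-1$, so $\ker\bar\alpha$ is a line in $\F_q^m$, and I set
$$\psi\colon \Id(\Or_\p;\p) \longrightarrow \Pj^{m-1}(\F_q), \qquad \psi(\Or_\p\alpha) = \ker\bar\alpha.$$
Well-definedness follows because two generators of the same ideal differ by left multiplication by a unit $u \in \Or_\p^\times$, and $\rho(u) \in \GL_m(\F_q)$ does not change the kernel. For bijectivity I would invoke the correspondence theorem: left ideals containing $\rad\Or_\p$ correspond to left ideals of $\M_m(\F_q)$, and $\Or_\p\alpha/\rad\Or_\p = \M_m(\F_q)\bar\alpha$ is precisely the ideal of matrices whose rows lie in $\rowspace(\bar\alpha)$. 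Since $\rowspace(\bar\alpha) = (\ker\bar\alpha)^{\perp}$, the ideal is determined by the line $\ker\bar\alpha$ and conversely, which gives injectivity; surjectivity then follows either by a cardinality count, $|\Id(\Or_\p;\p)| = (q^m-1)/(q-1) = |\Pj^{m-1}(\F_q)|$, or by lifting a prescribed line directly. Transporting the symmetric-group structure along $\psi$ (by $f \mapsto \psi f \psi^{-1}$) yields the isomorphism $\Sym(\Id(\Or_\p;\p)) \simeq \Sym(\Pj^{m-1}(\F_q))$.

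The heart of the argument is the computation establishing commutativity. For $\omega \in \Or_\p^\times$ and a principal ideal $\Or_\p\alpha$ of reduced norm $\p$, the Remark gives $\sigma_\omega(\Or_\p\alpha) = \Or_\p\alpha\omega$, and $\overline{\alpha\omega} = \bar\alpha\,\rho(\omega)$ with $\rho(\omega) \in \GL_m(\F_q)$. Because left multiplication by an invertible matrix is a bijection of $\F_q^m$,
$$\ker\!\big(\bar\alpha\,\rho(\omega)\big) = \rho(\omega)^{-1}\ker\bar\alpha,$$
so $\psi(\sigma_\omega(\Or_\p\alpha)) = \rho(\omega)^{-1}\psi(\Or_\p\alpha) = \tau(\rho(\omega))(\psi(\Or_\p\alpha))$. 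Reading $\psi$ as the right-hand vertical isomorphism, this says exactly that $\psi \circ \sigma_\omega = \tau(\rho(\omega)) \circ \psi$ for every $\omega \in \Or_\p^\times$, which is the commutativity of the square.

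I expect the main obstacle to be the clean verification that $\psi$ is a bijection in the general-degree setting, and in particular the bookkeeping that $\bar\alpha$ has rank exactly $m-1$ together with the identity $\rowspace(\bar\alpha) = (\ker\bar\alpha)^{\perp}$, so that the single line $\ker\bar\alpha$ really does recover the full $(m-1)$-dimensional ideal datum (for $m \ge 3$ it is tempting, but incorrect, to suspect that the kernel forgets information). Once the bijection is pinned down, commutativity is the short kernel computation above; the only remaining care is to confirm that reduction modulo the radical sends $\sigma_\omega$ to the left action of $\rho(\omega)^{-1}$ rather than its transpose, which is precisely why the kernel, and not the rowspace, is the coordinate matching $\tau(Q)(v) = Q^{-1}v$.
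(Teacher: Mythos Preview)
Your argument is correct. The paper itself does not give a proof of this theorem; it simply cites \cite[Theorem 4.7]{SC} and records the statement for later use. Your proposal reconstructs essentially the argument one finds there: realize left ideals of reduced norm $\p$ as lines in $\F_q^m$ via $\Or_\p\alpha \mapsto \ker\bar\alpha$, check that a generator of such an ideal reduces to a matrix of rank $m-1$ (which you justify via the valuation of $\nrd(\alpha)=\det(\alpha)$), and then read off commutativity from $\ker(\bar\alpha\,\rho(\omega))=\rho(\omega)^{-1}\ker\bar\alpha$. The only point I would tighten is the phrase ``$\rowspace(\bar\alpha)=(\ker\bar\alpha)^{\perp}$'': this is true with respect to the standard bilinear pairing on $\F_q^m$, and you should say so explicitly so that the passage from ``same left ideal $\Leftrightarrow$ same rowspace'' to ``same kernel'' is transparent. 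With that clarification your proof is complete and matches the approach of the cited reference.
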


\begin{proof}
See \cite[Theorem 4.7]{SC}. 
\end{proof}

\begin{theorem}[Forsyth--Gurev--Shrima; Chari] \label{thm3}
If $\Or$ is a quaternion algebra, then all cycles of $\sigma_\omega$ that are not fixed points are the same length; i.e., there is an integer $\ell>0$ such that every cycle either has length either 1 or $\ell$.
\end{theorem}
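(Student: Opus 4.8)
The plan is to reduce the statement, via Theorem~\ref{thm2}, to a combinatorial fact about the action of $\GL_2(\F_q)$ on the projective line, and then to settle that fact by a short case analysis on canonical forms; this is essentially the argument of Forsyth--Gurev--Shrima \cite{FGS} in the Hurwitz case, now available because Theorem~\ref{thm2} uses only the local structure. Since $B$ is a quaternion algebra, $B_\p$ is either split, in which case $\Or_\p \cong \M_2(R_\p)$ and $\Or_\p/\rad\Or_\p \cong \GL_2(\F_q)$ (so $m=2$), or ramified, in which case $\Or_\p/\rad\Or_\p$ is a field, $\Id(\Or_\p;\p)$ is a single point, and there is nothing to prove. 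In the split case, writing $Q \colonequals \rho(\omega)\in\GL_2(\F_q)$, Theorem~\ref{thm2} identifies the cycle structure of $\sigma_\omega$ with that of the permutation $v\mapsto Q^{-1}v$ of $\Pj^1(\F_q)$, so it suffices to show that for every $Q\in\GL_2(\F_q)$ all cycles of length $>1$ of this permutation share a common length. First I would note that the permutation depends only on the image of $Q$ in $\PGL_2(\F_q)$ (scalars act trivially) and that conjugate matrices induce permutations of the same cycle type (conjugating $Q$ by $g$ relabels $\Pj^1(\F_q)$ by $v\mapsto gv$), so I may replace $Q$ by any convenient representative of its conjugacy class, i.e.\ argue by cases on how $\charpoly(Q)$ factors over $\F_q$.

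The case analysis I have in mind is the following. If $Q$ is scalar there is nothing to prove. If $\charpoly(Q)$ splits with distinct roots $\lambda_1\neq\lambda_2\in\F_q$, take $Q$ diagonal; the two coordinate lines are its only fixed points, and identifying the remaining $q-1$ points of $\Pj^1(\F_q)$ with $\F_q^\times$ turns $Q^{-1}$ into multiplication by $\lambda_2\lambda_1^{-1}$, so every non-trivial cycle has length the multiplicative order of $\lambda_1/\lambda_2$. If $\charpoly(Q)$ has a repeated root but $Q$ is not scalar, take $Q$ a scalar multiple of $\sbmat{1&1\\0&1}$; then $[1:0]$ is the only fixed point and on the complementary affine line $Q^{-1}$ acts as translation by a nonzero element, so every non-trivial cycle has length $\opchar(\F_q)$. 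If $\charpoly(Q)$ is irreducible over $\F_q$ then $Q$ has no fixed point on $\Pj^1(\F_q)$; over $\F_{q^2}$ it is diagonalizable with Galois-conjugate eigenvalues $\lambda,\lambda^q\in\F_{q^2}^\times$, and I would identify $\Pj^1(\F_q)$ with the norm-one subgroup $\mu_{q+1}\subseteq\F_{q^2}^\times$ in such a way that $Q^{-1}$ becomes multiplication by $\lambda^{q-1}\in\mu_{q+1}$, so every cycle has length the order of $\lambda^{q-1}$. The uniform conclusion, which I would state once: in each case the set of non-fixed points, when nonempty, is a torsor under a finite cyclic group on which $Q^{-1}$ acts by translation by a fixed element, forcing all non-trivial cycles to have the common length equal to the order of that element, and in the degenerate cases one may take $\ell$ arbitrary.

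The step I expect to require the most care is the irreducible (anisotropic) case: I must choose Galois-conjugate eigenvectors $v,\bar v$ for $Q$ over $\F_{q^2}$, check that the two eigenlines $[v],[\bar v]$ are not $\F_q$-rational, verify that in the coordinate $[av+b\bar v]\mapsto a/b$ on $\Pj^1(\F_{q^2})$ away from the eigenlines the $\F_q$-rational points are exactly the solutions of $t^{q+1}=1$ (i.e.\ $\mu_{q+1}$), and confirm that $\lambda^{q-1}$ lies in and acts on this subgroup. I would also record the minor point that in the unipotent case the relevant translation has additive order $\opchar(\F_q)$ even when $\opchar(\F_q)=2$, so no cycle length is lost there. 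The remaining work is routine bookkeeping: assembling the subcases and reading off $\ell$.
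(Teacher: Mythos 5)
Your argument is correct, and it is essentially the proof the paper relies on: the paper's own ``proof'' of this theorem is only a citation to Forsyth--Gurev--Shrima and Chari, and what you have written is a faithful self-contained reconstruction of that argument (reduce via Theorem~\ref{thm2} to the action $v\mapsto Q^{-1}v$ of $\PGL_2(\F_q)$ on $\Pj^1(\F_q)$, then case-split on the conjugacy class of $Q$, with the split-semisimple, non-semisimple, and irreducible-characteristic-polynomial cases each exhibiting the non-fixed locus as a torsor under a cyclic group acted on by translation). No gaps; your handling of the ramified case and of the anisotropic case via $\mu_{q+1}\subseteq\F_{q^2}^\times$ matches the standard treatment.
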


\begin{proof}
See Forsyth--Gurev--Shrima \cite{FGS} and Chari \cite{SC}.
\end{proof}

To set up the metacommutation problem in the Eichler case, we proceed locally. Let $R$ be a complete discrete valuation ring  with maximal ideal $\p=(p)$, residue field $\F_q \colonequals R/\p$, and field of fractions $F$. Let $\Or=\Or_1\cap \Or_2$ be an intersection of two (not necessarily distinct) maximal $R$-orders $\Or_1$ and  $\Or_2$   in the quaternion algebra $B\cong \M_2(F)$ over $F$. Such an order is called a (local) \textit{Eichler} order. By Hijikata \cite{HH},  $\Or$ is conjugate to an order of the form $\begin{pmatrix} R&R \\ \p^n & R\end{pmatrix}$. We call $d(\Or)\colonequals \p^n$ the \textit{level} of the local  Eichler order $\Or$. 

Let $\Id(\Or;\p)$ be   the set of principal left ideals of $\Or$ having reduced norm $\p$. Then we may assume such ideals are generated by elements of reduced norm $p$:

\begin{lemma}
\label{normp}
The set of principal left ideals $\Id(\Or; \p)$ of norm $\p$ is given by \[\Id(\Or; \p)=\{\Or\alpha: \alpha \in \Or, \, \nr(\alpha)=p\}.\]
\end{lemma}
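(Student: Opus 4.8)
The plan is to prove the two inclusions separately. The inclusion $\{\Or\alpha : \alpha\in\Or,\ \nr(\alpha)=p\}\subseteq \Id(\Or;\p)$ is the easy direction: if $\alpha\in\Or$ has $\nr(\alpha)=p$, then $\nrd(\Or\alpha)$ is the $R$-ideal generated by $\nrd(\alpha)=\nr(\alpha)=p$ (using that $\nrd(\Or)=R$ since $\Or$ contains $1$), so $\nrd(\Or\alpha)=\p$; and $\Or\alpha$ is by construction a principal left ideal. So $\Or\alpha\in\Id(\Or;\p)$.

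For the reverse inclusion, let $P\in\Id(\Or;\p)$, so $P=\Or\beta$ for some $\beta\in B^\times$ with $\nrd(\Or\beta)=\p$. I would first observe that since $\nrd(\Or\beta)=R\,\nrd(\beta)=\p=(p)$, the element $\nrd(\beta)$ is a generator of $\p$; that is, $\nrd(\beta)=up$ for some $u\in R^\times$. The goal is then to replace $\beta$ by another generator $\alpha=\beta\varepsilon$ of the same left ideal (with $\varepsilon\in\Or^\times$, so that $\Or\alpha=\Or\beta$) such that $\nr(\alpha)=p$ exactly. Since $\nr(\alpha)=\nr(\beta)\nr(\varepsilon)=up\cdot\nr(\varepsilon)$, this amounts to showing that the unit group $\Or^\times$ surjects onto $R^\times$ under the reduced norm map (or at least that $u^{-1}$ is in the image); then choosing $\varepsilon\in\Or^\times$ with $\nrd(\varepsilon)=u^{-1}$ gives $\nr(\alpha)=p$ as desired. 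Finally I would note $\alpha\in\Or\beta\subseteq\Or$ since $1\in\Or$... wait, more carefully: $\alpha=\beta\varepsilon$, and I need $\alpha\in\Or$. Since $\beta\in P=\Or\beta\subseteq\Or$ (as $P$ is a left ideal of $\Or$, hence contained in $\Or$, using that $P$ is an integral ideal — indeed $\nrd(P)=\p\subseteq R$ forces $P\subseteq\Or$), and $\varepsilon\in\Or^\times\subseteq\Or$, we get $\alpha\in\Or$.

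The main obstacle is the surjectivity of $\nrd\colon\Or^\times\to R^\times$. Since $\Or$ may be conjugated to $\left(\begin{smallmatrix} R&R\\ \p^n&R\end{smallmatrix}\right)$ by Hijikata's result cited above, and conjugation preserves reduced norms, I may compute in this explicit model: the diagonal matrices $\operatorname{diag}(a,1)$ with $a\in R^\times$ lie in this order, are units (inverse $\operatorname{diag}(a^{-1},1)$ is also in the order), and have reduced norm $a$. Hence $\nrd\colon\Or^\times\to R^\times$ is already surjective, which closes the argument. (One should double-check that conjugating back does not cause trouble: the isomorphism $\Or\cong\left(\begin{smallmatrix} R&R\\ \p^n&R\end{smallmatrix}\right)$ is given by conjugation inside $B=\M_2(F)$ by some $g\in\GL_2(F)$, and $\nrd(g x g^{-1})=\nrd(x)$, so the statement transports cleanly.) This is the only nontrivial input; everything else is bookkeeping with reduced norms and the fact that integral left ideals are contained in the order.
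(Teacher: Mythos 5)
Your overall strategy coincides with the paper's: both reduce to showing that a generator of $P$ can be adjusted by a unit of $\Or$ so that its reduced norm becomes exactly $p$, and both realize the needed unit explicitly as a diagonal matrix $\operatorname{diag}(u^{-1},1)$, which lies in $\left(\begin{smallmatrix} R&R\\ \p^n&R\end{smallmatrix}\right)^\times$ and has reduced norm $u^{-1}$. However, there is a genuine error in how you apply that unit. You set $\alpha=\beta\varepsilon$ and assert $\Or\alpha=\Or\beta$ because $\varepsilon\in\Or^\times$. That is false in a noncommutative order: the generators of the left ideal $\Or\beta$ are exactly the elements $\mu\beta$ with $\mu\in\Or^\times$, so it is \emph{left} multiplication by a unit that preserves the ideal, while right multiplication by $\varepsilon\in\Or^\times$ sends $\Or\beta$ to the generally different left ideal $\Or\beta\varepsilon$. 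Indeed, the map $P\mapsto P\varepsilon$ is precisely the metacommutation permutation $\sigma_\varepsilon$ that this whole paper studies, and it is usually not the identity. The fix is one line: take $\alpha=\varepsilon\beta$ with $\nrd(\varepsilon)=u^{-1}$; then $\Or\alpha=\Or\beta$ and $\nrd(\alpha)=\nrd(\varepsilon)\nrd(\beta)=u^{-1}\cdot up=p$, which is exactly the paper's argument with $\mu=\operatorname{diag}(u^{-1},1)$.

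One smaller point: your justification that $\nrd(P)=\p\subseteq R$ forces $P\subseteq\Or$ does not hold for fractional ideals --- for instance $\beta=\operatorname{diag}(p^2,p^{-1})$ satisfies $\nrd\left(\M_2(R)\beta\right)=\p$ but $\beta\notin\M_2(R)$. The containment $P\subseteq\Or$ should instead be read off from the convention, used implicitly in the paper, that a ``left ideal of $\Or$'' here means an integral one, i.e.\ a subset of $\Or$.
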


\begin{proof} Let $P \in \Id(\Or; \p)$. Then $P$ is principal, so $P=\Or \alpha$ for some $\alpha \in \Or$. Then $\nr(P)=\nr(\Or)\nr(\alpha)=R\nr(\alpha) $, so $\nr(\alpha)=p u$ for some $u \in R^\times$. Let $\mu \colonequals \begin{pmatrix} u^{-1} &0 \\ 0&1\end{pmatrix} \in \Or^\times$, so $ \Or \mu \alpha = \Or \alpha$ and $\nr(\mu \alpha)=p$, and we have the containment $\Id(\Or; \p) \subseteq \{\Or\alpha: \alpha \in \Or, \, \nr(\alpha)=p\}$. The reverse containment is obvious.
\end{proof} 

Recall that for an element $\omega \in \Or^\times$, we define the permutation $\sigma_\omega$ by
$$\sigma_\omega(P)= P \omega$$ as in \ref{permutation}.
Suppose $\Or=\Or_1 \cap \Or_2$ is not maximal, so $\Or_1$ and $\Or_2$ are distinct maximal $R$-orders. Given $\omega \in \Or^\times$, we will describe the cycle structure of $\sigma_\omega$ in terms of permutations of the left ideals of $\Or_1$ and $\Or_2$ of reduced norm $\p$, separately. When $\rad \Or \in \Id(\Or;\p)$, we show that it is fixed under $\sigma_\omega$ for all $\omega \in (R_\p)^\times$, and instead restrict $\sigma_\omega$ to the set \[\Id(\Or;\p)'\colonequals \Id(\Or;\p) \setminus \rad \Or.\] To accomplish this, in the following section we introduce the \textit{Bruhat-Tits tree} $\mathcal{T}_\p$ for $\GL_2(F)$.

\section{Ideals of reduced norm $\p$}

Eichler \cite{ME} and Brzezinski \cite{JB} show that the cardinality $\#\Id(\Or;\p)$ is either $2q$ or $2q+1$, depending on the level of the order.  In this section, we give a combinatorial description of these ideals, together with their explicit generators.  We retain notation from the end of Section \ref{eichlersetup}, and start with some known facts about the Bruhat-Tits tree for $\GL_2(F)$. For more details about the tree, see Serre \cite{Serre}.  Let $V \colonequals F^2$. We say that two (full) $R$-lattices $L_1$ and $L_2$ in $V$ are \textit{homothetic} if $L_1=L_2 a$ for some $a \in  F^\times$. Homothety of lattices is an equivalence relation, which we will denote by $[L]=[La]$. We can identify $B$ with the ring of linear transformations $\End_F(V)$ acting on $V$ on the right, so  $B^\times$ acts transitively on the lattices in $V$ and preserves homothety: $$[L]\cdot \xi \colonequals [L \xi]=[L\xi a]=[L a\xi]=[L a]\cdot \xi.$$

We now define the Bruhat-Tits tree $\mathcal{T}_\mathfrak{p}$. The set vertices of $\mathcal{T}_\p$ is given by the homothety classes of latices $[L]$ in $V$. There is an edge between two vertices if there are lattices $L_1$ and $L_2$ in their respective homothety classes such that $p L_1 \subsetneq L_2 \subsetneq L_1$.  It is known that $\mathcal{T}_\p$ is a $(q+1)$-regular tree. 

 We can interpret the distance between two vertices via their corresponding homothety classes. In particular, given two lattices $L_1$ and $L_2$, by the invariant factor theorem there exists an $R$-basis $\{f_1, f_2\}$ of $L_1$ such that $\{p^af_1, p^bf_2\}$ is an $R$-basis for $L_2$. Define the distance  $\delta([L_1],[L_2])\colonequals |b-a|$, which is well-defined on homothety classes.  Moreover, the action of $B^\times$ extends to an action on the tree which preserves $\delta$. In Figure \ref{tree}, we see a piece of $\mathcal{T}_2$, the tree for $GL_2(\Q_2)$.
\begin{figure}
\caption{Piece of the  tree $\mathcal{T}_2$. }
\label{tree}
\begin{tikzpicture}[scale=0.8,
  grow cyclic,
  level distance=2cm,
  level/.style={
    level distance/.expanded=\ifnum#1>1 \tikzleveldistance/1.5\else\tikzleveldistance\fi,
    nodes/.expanded={\ifodd#1 fill=none\else fill=none\fi}
  },
  level 1/.style={sibling angle=120},
  level 2/.style={sibling angle=90},
  level 3/.style={sibling angle=90},
  level 4/.style={sibling angle=45},
  nodes={circle,draw,inner sep=+0pt, minimum size=5pt},
  ]
\path[rotate=30]
  node {}
  child foreach \cntI in {1,...,3} {
    node {}
    child foreach \cntII in {1,...,2} { 
      node {}
      child foreach \cntIII in {1,...,2} 
    }
  };
\end{tikzpicture}
\end{figure}
We turn to the connection between the tree $\mathcal{T}_\p$ and Eichler orders in $B$.  We can identify the maximal order $\M_2(R)$ with $\End_R(L_0)$, which acts on the right on  the free  $R$-lattice $L_0 \colonequals Re_1  + Re_2 $, where $\{e_1,e_2\}$ is the standard basis for $V$ . By Reiner \cite[(17.3)]{IR}, each maximal order in $B$ is conjugate to $\M_2(R)$ by an element $\xi \in B^\times$, and we can identify $\xi^{-1}M_2(R)\xi$ with $\End_R(L_0\xi)$. We can also easily see that $[L_1]=[L_2]$ if and only if $\End_R(L_1)=\End_R(L_2)$, so we have bijections between the set of homothety classes of full $R$-lattices in $V$, the set of maximal orders in $B$, and the vertices in $\mathcal{T}_\p$. Moreover, the  action of $B^\times$ on $\mathcal{T}_\p$ determines an action of $B^\times$ on the set of maximal orders corresponding to conjugation.

From now on, we consider non-maximal Eichler orders. Suppose we have an Eichler order $\Or=\Or_1\cap \Or_2$ of level $d(\Or)=\p^n$. 
 Then one can associate to $\Or$ the segment in $\mathcal{T}_\p$ whose endpoints correspond to $\Or_1$ and $\Or_2$, and whose length is precisely $n$. Following from the work of Hijikata \cite{HH}, $\Or$ is also the intersection of all the
maximal orders contained in this segment. Since we can take $\Or$ up to conjugation, from now on we assume $\Or=\begin{pmatrix} R & R \\ \p^n &R\end{pmatrix}$ with $n\ge 1$, in which case
\[ \Or=\bigcap_{i=0}^n 
\begin{pmatrix} 
R & \p^{-i} \\
\p^i&R\end{pmatrix}=\bigcap_{i=0}^n \End_R(L_0\gamma_i)\] where $\gamma_i=\begin{pmatrix} 0&1 \\ p^i&0\end{pmatrix}$. Then each vertex corresponding to $[L_0\gamma_i]$ lies on the segment associated to $\Or$ (see Figure \ref{Osegment}). Denoting $\gamma\colonequals \gamma_n$, we have 
 \begin{equation} \label{eichler} \Or=\Or_1\cap \Or_2=\M_2(R) \cap \gamma^{-1} \M_2(R) \gamma. \end{equation}

\begin{figure}
\caption{Subtree containing the segment associated to $\Or$}
\label{Osegment}

\begin{center}
\begin{tikzpicture}[scale=0.8,
  grow cyclic,
  level distance=2cm,
  level/.style={
    level distance/.expanded=\ifnum#1>1 \tikzleveldistance/1.5\else\tikzleveldistance\fi,
    nodes/.expanded={\ifodd#1 fill=white\else fill=white\fi}
  },
  level 1/.style={sibling angle=12},
  nodes={circle,draw,inner sep=+0pt, minimum size=5pt},
  ]

\draw[dotted] (2,0) -- (4.9,0);

\path[rotate=180]
  node {}
  child foreach \cntI in {1,...,5} {
    node {}
  };

\path[]
  node {}
  child foreach \cntI in {1} {
    node {}
  };

\path[rotate=90] (0,0) -- (0,-2)
  node {}
  child foreach \cntI in {1,...,4} {
    node {}
  };

\path[rotate=90] (0,0) -- (0,-5)
  node {}
  child foreach \cntI in {1,...,4} {
    node {}
  };

\draw (5.1,0) -- (6.9,0);

\path[] (5,0) -- (7,0)
  node {}
  child foreach \cntI in {1,...,5} {
    node {}
  };

\node[draw=none,fill=none,below] at (0,-0.2)  {\fontsize{8}{6}\selectfont$[L_0]$};
\node[draw=none,fill=none,below] at (2,-0.1)  {\fontsize{8}{6}\selectfont$[L_0\gamma_1]$};
\node[draw=none,fill=none,below] at (7,0)  {\fontsize{8}{6}\selectfont$[L_0\gamma_n]$};
\node[draw=none,fill=none,below] at (7,-0.3)  {\fontsize{8}{6}\selectfont$=[L_0\gamma]$};
\node[draw=none,fill=none,below] at (5,0.2)  {\fontsize{8}{6}\selectfont$[L_0\gamma_{n-1}]$};

\end{tikzpicture}
\end{center}
\end{figure}

We will make frequent use of the following result:
\begin{lemma}
\label{wfix}
Consider a maximal order $\Lambda \subseteq B$, corresponding to the vertex $X$ in $\mathcal{T}_\p$. Then $\mu \in \Lambda^\times$ acts on $\mathcal{T}_\p$ by fixing $X$.
\end{lemma}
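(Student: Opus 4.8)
The plan is to reduce the claim to the very definition of the vertex set of $\mathcal{T}_\p$: a maximal order is the endomorphism ring of a lattice, and a unit of that order must stabilize the lattice itself, hence fix its homothety class.

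First I would write $X = [L]$ for a full $R$-lattice $L$ in $V$ with $\Lambda = \End_R(L)$; such an $L$ exists and is unique up to homothety by the bijections recalled above between vertices of $\mathcal{T}_\p$, homothety classes of lattices, and maximal orders of $B$, under which the right action of $B^\times$ on the tree is $[L]\cdot\xi = [L\xi]$. In these terms the action of $\mu$ sends $X$ to $[L\mu]$, so it suffices to show $[L\mu]=[L]$.

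Next I would check that in fact $L\mu = L$. Since $\mu \in \Lambda^\times$, both $\mu$ and $\mu^{-1}$ lie in $\Lambda = \End_R(L)$, so (acting on the right) each maps $L$ into itself: $L\mu \subseteq L$ and $L\mu^{-1}\subseteq L$. Applying $\mu$ on the right to the second inclusion gives $L = (L\mu^{-1})\mu \subseteq L\mu$, and combining with $L\mu\subseteq L$ yields $L\mu = L$. Hence $[L\mu]=[L]=X$, which is the assertion. Equivalently, one may argue via the induced conjugation action on maximal orders: $\End_R(L\mu) = \mu^{-1}\End_R(L)\mu = \mu^{-1}\Lambda\mu = \Lambda$ because $\mu\in\Lambda^\times$, and equal endomorphism rings correspond to equal homothety classes.

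I do not anticipate a real obstacle here; the only point to state carefully is the distinction between an element of $B^\times$ that merely belongs to $\Lambda$ and a genuine unit $\mu\in\Lambda^\times$ — it is precisely the invertibility of $\mu$ inside $\Lambda$ that upgrades the inclusion $L\mu\subseteq L$ to the equality $L\mu=L$, and hence produces a fixed vertex rather than a merely adjacent or nearby one.
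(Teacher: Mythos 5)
Your proof is correct and follows the same route as the paper's: identify $X=[L]$ with $\Lambda=\End_R(L)$ and deduce $L\mu=L$ for $\mu\in\Lambda^\times$. You simply spell out the two-inclusion argument ($L\mu\subseteq L$ and $L\mu^{-1}\subseteq L$) that the paper leaves implicit.
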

\begin{proof} Suppose $X$ is given by the homothety class $[L]$. Then $\Lambda=\End_R(L)$, so  $L\mu=L$ for $\mu \in \Lambda^\times$. \end{proof}

 Since conjugation by  $B^\times$ gives an action on the tree, any order conjugate to $\Or$ will correspond to a segment of length $n$ as well. We can take the connection between algebraic properties of $\Or$ and the structure of the tree further, and we have the following connection between the set $\Id(\Or; \p)$ and certain segments in the tree.

\begin{lemma}
\label{ideal-order}
Let $\Or$ be an Eichler order. Then there is a bijective correspondence between the sets 
\[ \Id(\Or; \p) \overset{\sim}{\longleftrightarrow} \left\{\alpha^{-1}\Or\alpha: \alpha \in \Or, \, \nr(\alpha)=p\right\}.\] 
\end{lemma}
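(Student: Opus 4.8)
The plan is to exhibit the bijection explicitly and then reduce everything to understanding the normalizer of $\Or$ in $B^\times$. By Lemma~\ref{normp} every element of $\Id(\Or;\p)$ has the form $\Or\alpha$ with $\alpha \in \Or$ and $\nr(\alpha) = p$, so I would study the assignment
\[
\Psi\colon \Id(\Or;\p) \longrightarrow \{\alpha^{-1}\Or\alpha : \alpha \in \Or,\ \nr(\alpha) = p\},\qquad \Or\alpha \longmapsto \alpha^{-1}\Or\alpha,
\]
where $\alpha^{-1}\Or\alpha$ is precisely the right order of $\Or\alpha$. Surjectivity is immediate once $\Psi$ is well defined (note $\nr(\Or\alpha) = \nr(\alpha)R = \p$, so $\Or\alpha$ really lies in $\Id(\Or;\p)$), and both well-definedness and injectivity follow from the single equivalence
\[
\Or\alpha = \Or\beta \iff \alpha^{-1}\Or\alpha = \beta^{-1}\Or\beta \qquad (\alpha,\beta \in \Or,\ \nr(\alpha) = \nr(\beta) = p).
\]

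For the forward implication, $\Or\alpha = \Or\beta$ gives $\beta = x\alpha$ and $\alpha = y\beta$ with $x,y \in \Or$, hence $xy = yx = 1$ since $\alpha,\beta \in B^\times$, so $x = u \in \Or^\times$ and $\beta^{-1}\Or\beta = \alpha^{-1}u^{-1}\Or u\alpha = \alpha^{-1}\Or\alpha$. For the converse, set $\eta \colonequals \beta\alpha^{-1}$; then $\eta^{-1}\Or\eta = \alpha(\alpha^{-1}\Or\alpha)\alpha^{-1} = \Or$, so $\eta$ normalizes $\Or$, and $\nr(\eta) = \nr(\beta)\nr(\alpha)^{-1} = 1$. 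Thus it suffices to prove that such an $\eta$ lies in $\Or^\times$, for then $\beta = \eta\alpha \in \Or^\times\alpha$ and $\Or\alpha = \Or\beta$.

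To pin down $N_{B^\times}(\Or)$ I would argue on the Bruhat--Tits tree: conjugation by $B^\times$ acts on $\mathcal{T}_\p$, and because $\Or = \bigcap_{i=0}^n \End_R(L_0\gamma_i)$ is the intersection of the maximal orders along its length-$n$ segment, an element normalizes $\Or$ exactly when it stabilizes that segment. The pointwise stabilizer of the segment is $F^\times\Or^\times$ (since $\Or^\times = \bigcap_i \End_R(L_0\gamma_i)^\times$, using Lemma~\ref{wfix}), while a short computation shows that $\gamma$ normalizes $\Or$ and realizes the flip $[L_0\gamma_i] \leftrightarrow [L_0\gamma_{n-i}]$; hence $N_{B^\times}(\Or) = F^\times\Or^\times \sqcup F^\times\Or^\times\gamma$. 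If $\eta = cu$ with $c \in F^\times$ and $u \in \Or^\times$, then $1 = \nr(\eta) = c^2\nr(u)$ with $\nr(u) \in R^\times$ forces $c \in R^\times$ (as $R$ is a DVR), so $\eta \in \Or^\times$. If instead $\eta = cu\gamma$, then $1 = \nr(\eta) = -c^2\nr(u)p^n$ forces $2v(c) = -n$, which is impossible for $n$ odd; and for $n$ even, the containment $\beta = cu\gamma\alpha \in \Or$ with $v(c) = -n/2$ gives $p^{-n/2}\gamma\alpha \in \Or$, and writing $\alpha = \sbmat{a & b \\ p^n c' & d}$ with $a,b,c',d \in R$ this forces $v(a), v(d) \ge n/2$, whence $v(\nr(\alpha)) = v(ad - p^n bc') \ge n \ge 2$, contradicting $\nr(\alpha) = p$. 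Therefore $\eta \in \Or^\times$ in all cases, which proves the equivalence and hence the lemma.

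I expect the main obstacle to be this last step, namely ruling out the ``flip'' coset $F^\times\Or^\times\gamma$: identifying the normalizer of $\Or$ via the tree and disposing of the even-level case is where the real work lies, whereas well-definedness, surjectivity, and the forward implication are routine.
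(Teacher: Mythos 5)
Your proposal is correct and follows essentially the same route as the paper: reduce to Lemma~\ref{normp}, prove well-definedness via $\alpha\beta^{-1}\in\Or^\times$, and for injectivity use that $\beta\alpha^{-1}$ lies in the normalizer $F^\times\Or^\times\cup F^\times\Or^\times\gamma$ and rule out the $\gamma$-coset by the same valuation-of-entries contradiction (your even-$n$ computation is exactly the paper's ``simple calculation''). The only difference is that you sketch a derivation of the normalizer from the action on the Bruhat--Tits tree, whereas the paper simply cites Hijikata for it.
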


\begin{proof}
By Lemma \ref{normp}, we have  $\Id(\Or; \p)=\{ \Or \alpha : \alpha \in \Or, \nr(\alpha)=p \}$, so we have a map $\Or\alpha \mapsto \alpha^{-1}\Or \alpha$ and we must show this map is well-defined. Suppose $\Or \alpha = \Or \beta$ for some  elements $\alpha, \beta$ of reduced norm $p$. Then $\Or \alpha \beta^{-1}=\Or$, so $\alpha \beta^{-1} \in \Or^\times$ and therefore  $(\alpha \beta^{-1})^{-1}\Or \alpha \beta^{-1}=\Or$. This implies $\alpha^{-1}\Or \alpha=\beta^{-1}\Or \beta$, and it follows that each ideal has a well-defined  associated Eichler order. 

Conversely, suppose $\alpha^{-1} \Or \alpha=\beta^{-1}\Or \beta$ for two $\alpha, \beta \in \Or$ such that $\nr(\alpha)=\nr(\beta)=p$.  Then $\alpha \beta^{-1} \in \mathcal{N}(\Or)$, where $\mathcal{N}(\Or)$ is the normalizer of $\Or$. Taking $\Or$ as in Equation (\ref{eichler}), by Hijikata \cite{HH}, we have $\mathcal{N}(\Or)=  \gamma F^\times \Or^\times \cup F^\times \Or^\times$. Suppose $\alpha \beta^{-1} \in \gamma F^\times \Or^\times$. Since $R^\times \subseteq \Or^\times$, we can write $\alpha \beta^{-1}= \gamma p^\ell \mu$ for some $\ell \in \Z$ and $\mu \in \Or^\times$. Then $1=\nr(\alpha \beta^{-1})=p^n p^{2 \ell} u$ for some $u \in R^\times$, and it must be that $n=-2\ell$. But then $\alpha =\begin{pmatrix} 0& p^{-n/2} \\ p^{n/2} &0\end{pmatrix} \mu \beta$. Since both $\alpha$ and $\mu \beta$ are of the form $\begin{pmatrix} a & b\\ p^{n}c & d\end{pmatrix}$ for some $a,b,c,d \in R$, a simple calculation shows we have reached a contradiction since $n>0$. Therefore, $\alpha\beta^{-1} \in F^\times \Or^\times$. A similar reduced norm argument as above shows that in fact, $\alpha \beta^{-1} \in \Or^\times$. This gives $\Or \alpha \beta^{-1}=\Or$ and therefore $\Or \alpha=\Or \beta$, so each Eichler order in the second set corresponds to a well-defined ideal of norm $\p$.
\end{proof}

 Moreover, we have a geometric interpretation of the bijection in Lemma \ref{ideal-order}.  

\begin{prop}\label{segments}
Let $\Or$ be an Eichler order of level $\p^n$ as in Equation (\ref{eichler}), and let $XY$ be the segment associated to $\Or$. Then there is a bijective correspondence \[ \Id(\Or; \p) \overset{\sim}{\longleftrightarrow} \left\{\begin{array}{l}\text{ segments $ZT$ in $\mathcal{T}_\p$ of length $n$} \\ \text{ such that  $\delta(X,Z)=\delta(Y,T)=1$}\end{array}\right\}.\] In particular, each ideal $\Or \alpha$ corresponds to the segment associated to $\alpha^{-1} \Or \alpha$, and which has endpoints $[L_0\alpha]$ and $[L_0\gamma\alpha]$.
\end{prop}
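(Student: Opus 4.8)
The plan is to combine the algebraic bijection of Lemma~\ref{ideal-order} with the dictionary (established just before that lemma) between maximal orders, homothety classes of lattices, and vertices of $\mathcal{T}_\p$. By Lemma~\ref{ideal-order} the set $\Id(\Or;\p)$ is in bijection with the set of conjugate orders $\{\alpha^{-1}\Or\alpha : \alpha\in\Or,\ \nr(\alpha)=p\}$, so it suffices to show that $\alpha\mapsto (\alpha^{-1}\Or\alpha)$ followed by ``take the associated segment'' lands bijectively on the set of length-$n$ segments $ZT$ with $\delta(X,Z)=\delta(Y,T)=1$, and moreover that the resulting segment has endpoints $[L_0\alpha]$ and $[L_0\gamma\alpha]$. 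Since $B^\times$ acts on $\mathcal{T}_\p$ by conjugation compatibly with the lattice action, conjugating $\Or$ by $\alpha^{-1}$ (equivalently, acting by $\alpha$) sends the segment $XY$ associated to $\Or$—whose endpoints are $[L_0]=X$ and $[L_0\gamma]=Y$—to the segment with endpoints $[L_0]\cdot\alpha=[L_0\alpha]$ and $[L_0\gamma]\cdot\alpha=[L_0\gamma\alpha]$. This segment still has length $n$ because $\delta$ is $B^\times$-invariant. This already gives the ``In particular'' clause once the main correspondence is in place.

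The substantive content is the characterization of the image: I must show (i) for every $\alpha\in\Or$ with $\nr(\alpha)=p$ the new endpoints $Z=[L_0\alpha]$, $T=[L_0\gamma\alpha]$ satisfy $\delta(X,Z)=\delta(Y,T)=1$, and (ii) every such segment arises this way. For (i), the key computation is local: $\alpha\in\Or=\left(\begin{smallmatrix}R&R\\\p^n&R\end{smallmatrix}\right)$ with $\det\alpha$ a uniformizer forces $L_0\alpha$ to satisfy $pL_0\subsetneq L_0\alpha\subsetneq L_0$ (indeed $L_0\alpha\subseteq L_0$ since $\alpha$ has entries in $R$, and the index is $q$ because $\det\alpha$ generates $\p$), so $\delta([L_0],[L_0\alpha])=1$, i.e. $\delta(X,Z)=1$; applying the same reasoning after conjugating by $\gamma$ (using $\Or_2=\gamma^{-1}\M_2(R)\gamma$, so $\gamma\alpha\gamma^{-1}$ has $R$-integral entries and reduced norm $p$) gives $\delta(Y,T)=1$. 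I would verify that $Z\ne X$ and $T\ne Y$ — equivalently $\alpha\notin R^\times\M_2(R)^\times$ and $\gamma\alpha\gamma^{-1}\notin R^\times\M_2(R)^\times$ — which holds because $\nr(\alpha)=p$ is not a unit. One must also check $Z,T$ are genuinely the two ends of a length-$n$ segment and not something degenerate; this follows since conjugation preserves the order's level $n$.

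For surjectivity (ii), given a segment $ZT$ of length $n$ with $\delta(X,Z)=\delta(Y,T)=1$, the order $\Lambda'$ obtained as the intersection of all maximal orders along $ZT$ is an Eichler order of level $\p^n$; since all such orders are conjugate (Hijikata), $\Lambda'=\alpha^{-1}\Or\alpha$ for some $\alpha\in B^\times$, and rescaling $\alpha$ by an element of $F^\times$ (which does not change $\alpha^{-1}\Or\alpha$ nor the homothety classes $[L_0\alpha]$, $[L_0\gamma\alpha]$) we may normalize $\alpha\in\Or$ with $\nr(\alpha)=p$ by matching up which endpoint of $ZT$ corresponds to $X$ versus $Y$ — this is exactly where the hypothesis $\delta(X,Z)=\delta(Y,T)=1$ (rather than, say, swapping the roles of $Z$ and $T$) pins down the normalization. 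I expect the main obstacle to be this last normalization step: one has to argue carefully that the two conditions $\delta(X,Z)=1$ and $\delta(Y,T)=1$ together are equivalent to the existence of a representative $\alpha$ lying in $\Or$ (not merely in some conjugate of $\M_2(R)$) with $\nr(\alpha)=p$, ruling out the ``reversed'' segment which would instead be realized by an element of $\gamma\Or$. The reduced-norm bookkeeping from the proof of Lemma~\ref{ideal-order} (where $\alpha\beta^{-1}\in\gamma F^\times\Or^\times$ was excluded when $n>0$) is exactly the tool that makes this precise, so I would cite and reuse that argument. Finally, injectivity of the composite map is inherited directly from Lemma~\ref{ideal-order} together with the injectivity of the order-to-segment dictionary, so no extra work is needed there.
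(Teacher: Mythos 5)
Your forward direction is essentially the paper's: both send $\Or\alpha$ to the segment with endpoints $[L_0\alpha]$ and $[L_0\gamma\alpha]$ via Lemma~\ref{ideal-order}, and both verify $\delta(X,Z)=\delta(Y,T)=1$ by the chain $pL_0\subsetneq L_0\alpha\subsetneq L_0$ and its $\gamma$-conjugate (your index-$q$ argument and the paper's observation that $p\alpha^{-1}\in\M_2(R)$ are interchangeable). Where you genuinely diverge is surjectivity. The paper does not construct a preimage at all: it invokes the results of Eichler and Brzezinski that $\#\Id(\Or;\p)$ equals $2q+1$ for $n=1$ and $2q$ for $n\ge 2$, counts the admissible segments directly on the tree ($q$ leftward shifts, $q$ rightward shifts, plus the reversal of $XY$ when $n=1$), and concludes by matching cardinalities against the injective forward map. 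You instead take the Eichler order $\Lambda'$ attached to a given segment, use conjugacy of Eichler orders of the same level to write $\Lambda'=\alpha^{-1}\Or\alpha$, and normalize $\alpha$. Your route, if completed, is self-contained and does not lean on the external ideal counts; the paper's route is shorter but imports those counts as a black box.

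The one place your sketch is thin is the normalization step itself, and the tool you propose to reuse is not quite the right one: the exclusion of $\gamma F^\times\Or^\times$ in the proof of Lemma~\ref{ideal-order} addresses well-definedness of the map on ideals, not the existence of an integral representative. What actually closes the gap is an invariant-factor computation. The conjugating element is determined up to left multiplication by $\mathcal{N}(\Or)=F^\times\Or^\times\cup\gamma F^\times\Or^\times$; the $\gamma$-coset swaps the two endpoints (since $\gamma^2=p^nI$), and the hypotheses $\delta(X,Z)=\delta(Y,T)=1$ single out one labeling. Having fixed that labeling, rescale by $F^\times$ so that the invariant factors of $L_0\alpha$ in $L_0$ are $(1,p)$; then $\det\alpha\in pR^\times$, and the conditions $\delta(X,Z)=1$ and $\delta(Y,T)=1$ force the invariant factors of $L_0\alpha$ in $L_0$ and of $L_0\gamma\alpha$ in $L_0\gamma$ to both be $(1,p)$ \emph{for this single common rescaling}, whence $\alpha\in\End_R(L_0)\cap\End_R(L_0\gamma)=\Or$; a final unit adjustment as in Lemma~\ref{normp} gives $\nr(\alpha)=p$. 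With that paragraph supplied, your argument is a correct and arguably more informative alternative to the paper's counting proof.
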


\begin{proof}
We first prove the forward direction. By Lemma \ref{ideal-order}, $\Id(\Or; \p)$ is in correspondence with certain orders conjugate to $\Or$, which then correspond to certain segments of length $n$. 
Let $\alpha^{-1} \Or \alpha$ be such an order; since $\Or$ corresponds to the segment with endpoints $[L_0]$ and $[L_0\gamma]$, then $\alpha^{-1}\Or\alpha$ corresponds to the segment with endpoints $[L_0 \alpha]$ and $ [L_0 \gamma \alpha]$. We claim that $\delta([L_0], [L_0\alpha])=\delta([L_0 \gamma], [L_0 \gamma \alpha])=1$. We need to show that $$\begin{array}{lclcl} p L_0 & \subsetneq & L_0 \alpha & \subsetneq & L_0 \\ p L_0 \gamma & \subsetneq & L_0\gamma  \alpha & \subsetneq & L_0 \gamma\end{array}.$$
Since $\alpha \in \Or=\End_R(L_0) \cap \End_R(L_0 \gamma)$, the containments on the right follow, and since $\alpha$ is not a unit, the containments are strict.  If $\alpha=\begin{pmatrix} a &b \\ p^n c& d\end{pmatrix}$ for $a,b,c,d \in R$, and $\nr(\alpha)=\det(\alpha)=p$, then $p \alpha^{-1}=\begin{pmatrix} d & -b \\ -p^n c & a\end{pmatrix} \in M_2(R)$, so in particular $L_0 p \alpha^{-1} \subsetneq L_0$ which implies the first strict containment. On the other hand, $p L_0 \gamma \subsetneq L_0 \gamma \alpha \iff p L_0 \subsetneq L_0 \gamma \alpha \gamma^{-1}$. But $\gamma \alpha \gamma^{-1} \in \Or$, and $\nr(\gamma \alpha \gamma^{-1})=p$, and as before, we have  $p L_0 \subsetneq L_0 \gamma \alpha \gamma^{-1}$. Therefore, each ideal in $\Id(\Or;\p)$ has a corresponding segment satisfying the conditions in the statement of the proposition.

To prove the backward direction of the correspondence, we use  results of Eichler \cite{ME} and  Brzezinski \cite{JB}, who proved that the number of principal left ideals of reduced norm $\p$ is $2q+1$ when $n=1$, and $2q$ when $n\ge 2$. Therefore, it suffices that to show these are also the number of segments in $\mathcal{T}_\p$ satisfying our conditions.
\begin{figure}

\caption{}
\label{count1proof}

\begin{center}
\begin{tikzpicture}[scale=0.8,
  grow cyclic,
  level distance=2cm,
  level/.style={
    level distance/.expanded=\ifnum#1>1 \tikzleveldistance/1.5\else\tikzleveldistance\fi,
    nodes/.expanded={\ifodd#1 fill=white\else fill=white\fi}
  },
  level 1/.style={sibling angle=12},
  nodes={circle,draw,inner sep=+0pt, minimum size=5pt},
  ]

\draw[dotted] (2,0) -- (4.9,0);

\path[rotate=180]
  node {}
  child foreach \cntI in {1,...,5} {
    node {}
  };

\path[]
  node {}
  child foreach \cntI in {1} {
    node {}
  };

\path[] (0,0) -- (5,0)
  node {}
  child foreach \cntI in {1} {
    node {}
  };

\draw (5.1,0) -- (6.9,0);

\path[] (5,0) -- (7,0)
  node {}
  child foreach \cntI in {1,...,5} {
    node {}
  };


\node[draw=none,fill=none,below] at (0,-0.2)  {\fontsize{8}{6}\selectfont$X$};
\node[draw=none,fill=none,below] at (7,-0.2)  {\fontsize{8}{6}\selectfont$Y$};

\end{tikzpicture}
\end{center}
\end{figure}

 Consider the subtree in Figure \ref{count1proof}, where  $\Or$ corresponds to the  segment $XY$. There are at most three ways we can obtain a segment of length $n$ with vertices $Z$ and $T$ at distance $1$ of $X$ and $Y$. We could either move $XY$ to the left, to the right, and if $n=1$, we could also invert it since $\delta(X,Y)=\delta(Y,X)=n$.

The last statement follows from the bijection $\Or\alpha \leftrightarrow \alpha^{-1}\Or\alpha$.\end{proof}

Proposition \ref{segments} gives rise to the following corollaries, which will help us understand the action of the metacommutation permutation.

\begin{corollary}
\label{extendedaction}
For any $\omega \in \Or^\times$, $\sigma_\omega$ extends to an action on the segments corresponding to $\Id(\Or;\p)$. In particular, if $P=\Or \alpha$, then  $\sigma_\omega(P)$  corresponds to the segment associated to $\omega^{-1} \alpha^{-1}\Or \alpha \omega$.
\end{corollary}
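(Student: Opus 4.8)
The plan is to combine the description of $\sigma_\omega$ on principal left ideals from Section \ref{eichlersetup} with the bijection of Lemma \ref{ideal-order} and the geometric realization in Proposition \ref{segments}. First I would reduce to the case where the generator has reduced norm $p$: by Lemma \ref{normp}, every $P \in \Id(\Or;\p)$ can be written $P = \Or\alpha$ with $\nr(\alpha) = p$. For $\omega \in \Or^\times$ we have $\sigma_\omega(P) = P\omega + \Or\p = \Or\alpha\omega + \Or\p$; since $\alpha \in \Or$ and $\omega \in \Or^\times$, the product $\alpha\omega \in \Or$, and $\nr(\alpha\omega) = \nr(\alpha)\nr(\omega)$ where $\nr(\omega) \in R^\times$ because $\omega$ is a unit. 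So $\alpha\omega$ has reduced norm $pu$ for a unit $u$, and exactly as in the proof of Lemma \ref{normp} we can absorb $u$ into a unit on the left without changing the ideal, so $\sigma_\omega(P) = \Or(\alpha\omega)$ is again principal of reduced norm $\p$, with a generator of reduced norm $p$. (This is also the content of the Remark following Lemma \ref{normp} together with the earlier discussion: for principal ideals $\sigma_\omega(\Or\alpha) = \Or\alpha\omega$.)

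Next I would feed $\sigma_\omega(P) = \Or\alpha\omega$ into the bijection of Lemma \ref{ideal-order}, which sends a principal ideal $\Or\beta$ (with $\nr(\beta) = p$) to the conjugate order $\beta^{-1}\Or\beta$. Applying this with $\beta = \alpha\omega$ gives the associated order $(\alpha\omega)^{-1}\Or(\alpha\omega) = \omega^{-1}\alpha^{-1}\Or\alpha\omega$. Here I should note that $\alpha\omega$ is a legitimate representative for the bijection precisely because of the previous paragraph, and that the well-definedness established in Lemma \ref{ideal-order} guarantees the resulting conjugate order does not depend on which reduced-norm-$p$ generator we chose. Finally, Proposition \ref{segments} identifies each ideal in $\Id(\Or;\p)$ with the segment of length $n$ associated to its conjugate order — with explicit endpoints $[L_0\beta]$ and $[L_0\gamma\beta]$ — so $\sigma_\omega(P)$ corresponds to the segment associated to $\omega^{-1}\alpha^{-1}\Or\alpha\omega$, which is the claim. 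That the assignment $P \mapsto \sigma_\omega(P)$ is a permutation of $\Id(\Or;\p)$ is already known from Section \ref{eichlersetup}, so transporting it through the bijections of Lemma \ref{ideal-order} and Proposition \ref{segments} automatically yields an action on the corresponding set of segments.

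The only real subtlety — and the step I would be most careful about — is verifying that $\alpha\omega$ can genuinely be used as an input to the Lemma \ref{ideal-order} bijection, i.e. that after possibly rescaling by a unit to achieve reduced norm exactly $p$, the conjugate order is still $\omega^{-1}\alpha^{-1}\Or\alpha\omega$. This is immediate because rescaling $\alpha\omega$ on the left by a unit $\mu \in \Or^\times$ changes the conjugating element to $\mu\alpha\omega$, and $(\mu\alpha\omega)^{-1}\Or(\mu\alpha\omega) = \omega^{-1}\alpha^{-1}\mu^{-1}\Or\mu\alpha\omega = \omega^{-1}\alpha^{-1}\Or\alpha\omega$ since $\mu^{-1}\Or\mu = \Or$; alternatively one invokes the well-definedness already proved in Lemma \ref{ideal-order}. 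Everything else is a direct substitution, so the corollary follows without further work.
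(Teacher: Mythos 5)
Your proposal is correct and follows exactly the route the paper intends: the paper states this corollary without proof as an immediate consequence of $\sigma_\omega(\Or\alpha)=\Or\alpha\omega$ together with Lemma \ref{ideal-order} and Proposition \ref{segments}, and your argument (including the careful check that rescaling the generator by a unit of $\Or$ does not change the conjugate order) simply makes that deduction explicit.
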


\begin{corollary}
\label{rad}
Let $\rad(\Or)$ be the Jacobson radical of $\Or$, and suppose $\rad(\Or) \in \Id(\Or;\p)$. Then $\rad(\Or)$ corresponds to the segment associated to $\Or$, and is fixed by  $\sigma_\omega$ for any $\omega \in \Or^\times$.
\end{corollary}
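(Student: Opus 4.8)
The plan is to prove the two assertions of Corollary~\ref{rad} in turn, relying on the dictionary established in Proposition~\ref{segments} and Corollary~\ref{extendedaction}.

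First I would identify the segment attached to $\rad(\Or)$. Since $\rad(\Or)$ is by hypothesis an element of $\Id(\Or;\p)$, it is a principal left ideal of reduced norm $\p$, hence by Lemma~\ref{normp} of the form $\Or\alpha$ for some $\alpha\in\Or$ with $\nr(\alpha)=p$. For $\Or=\begin{pmatrix} R & R \\ \p^n & R\end{pmatrix}$ one computes directly that $\rad(\Or)=\begin{pmatrix} \p & R \\ \p^n & \p\end{pmatrix}$, and that this two-sided ideal equals $\Or\gamma=\gamma\Or$, where $\gamma=\begin{pmatrix} 0 & 1 \\ p^n & 0\end{pmatrix}$ (note $\nr(\gamma)=-p^n$, so one should first reduce to the case $n=1$, which is exactly the situation $\rad(\Or)\in\Id(\Or;\p)$: the level must be $\p$ for $\rad(\Or)$ to have reduced norm $\p$). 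Thus we may take $\alpha=\gamma$ (after adjusting by a unit and a scalar in $R^\times$ to achieve $\nr(\alpha)=p$, as in the proof of Lemma~\ref{normp} and the remark preceding it). By the last statement of Proposition~\ref{segments}, the segment associated to $\Or\gamma$ has endpoints $[L_0\gamma]$ and $[L_0\gamma\cdot\gamma]=[L_0\gamma^2]=[L_0 p]=[L_0]$; since $n=1$ these are precisely the two endpoints $X,Y$ of the segment of $\Or$ itself (just traversed in the opposite order), so $\rad(\Or)$ corresponds to the segment associated to $\Or$.

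Next I would verify that this segment is fixed by $\sigma_\omega$ for every $\omega\in\Or^\times$. By Corollary~\ref{extendedaction}, $\sigma_\omega(\rad\Or)$ corresponds to the segment associated to the conjugate order $\omega^{-1}\gamma^{-1}\Or\gamma\,\omega$. Since $\gamma$ normalizes $\Or$ (indeed $\gamma\in\mathcal{N}(\Or)$, as recalled from Hijikata~\cite{HH} in the proof of Lemma~\ref{ideal-order}), we have $\gamma^{-1}\Or\gamma=\Or$, and since $\omega\in\Or^\times$ it conjugates $\Or$ to itself as well; hence $\omega^{-1}\gamma^{-1}\Or\gamma\,\omega=\Or$, whose associated segment is $XY$. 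Therefore $\sigma_\omega(\rad\Or)=\rad\Or$. (Alternatively, and perhaps more cleanly, one can argue on the tree directly: $\omega\in\Or^\times=\Or_1^\times\cap\Or_2^\times$ fixes both vertices $X$ and $Y$ by Lemma~\ref{wfix}, hence fixes the whole geodesic segment $XY$; and $\gamma$ swaps $X$ and $Y$ while preserving the segment; so the segment of $\rad\Or$ is setwise fixed.)

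The main obstacle is the bookkeeping at the very start: pinning down exactly when $\rad(\Or)$ lies in $\Id(\Or;\p)$ (forcing $n=1$, i.e. level $\p$) and computing $\rad(\Or)$ explicitly as a principal ideal with a generator of the correct reduced norm, so that Proposition~\ref{segments} applies verbatim. Once $\rad(\Or)=\Or\gamma$ (up to units) is in hand, both conclusions follow formally from $\gamma\in\mathcal{N}(\Or)$ together with Corollary~\ref{extendedaction}, or from Lemma~\ref{wfix} on the tree side; no further serious computation is needed.
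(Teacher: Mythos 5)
Your proposal is correct and follows essentially the same route as the paper: identify $\rad(\Or)=\left(\begin{smallmatrix}\p & R\\ \p^n & \p\end{smallmatrix}\right)$, observe that membership in $\Id(\Or;\p)$ forces $n=1$ and $\rad(\Or)=\Or\gamma$, match its segment with that of $\gamma^{-1}\Or\gamma=\Or$, and deduce fixedness from $\gamma\in\mathcal{N}(\Or)$ together with Lemma~\ref{wfix}. Your version just adds some welcome bookkeeping (the sign of $\nr(\gamma)$ and the explicit endpoint computation $[L_0\gamma^2]=[L_0]$) that the paper leaves implicit.
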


\begin{proof}
We have that $\rad(\Or)=\begin{pmatrix} \p &R \\ \p^n &\p\end{pmatrix}$ (for example, see \cite[Remark II.4]{WP}). Then $\rad(\Or) \in \Id(\Or;\p)$ if and only if $d(\Or)=\p$, in which case $\rad(\Or)=\Or\gamma$, and $\rad(\Or)$ corresponds to the segment associated to $\gamma^{-1}\Or\gamma=\Or$. The last statement follows from Lemma \ref{wfix}.
\end{proof}

\section{Results}

In this section, we state our main results on the cycle structure of the permutation given by metacommutation. 
By Corollary \ref{rad}, when $n=1$ the ideal $\rad(\Or)$ is fixed by any $\sigma_\omega$, so it suffices to consider only $2q$ ideals in $\Id(\Or;\p)$. Define \[\Id(\Or;\p)'\colonequals \left\{
\begin{array}{ll}
     \Id(\Or;\p)\setminus\rad(\Or) & \text{if} \,\, n=1 \\
      \Id(\Or;\p) & \text{if} \,\, n\ge 2
\end{array}. \right. \] Moreover,  for each $s\in R/\p$, choose $b_s \in R$ to be any representative of the coset $s+\p \in R/\p$, and define $\alpha_s \colonequals \left(
\begin{array}{cc}
1 & b_s\\
0 & p 
\end{array}\right)$. Consider the sets of ideals  $$S_1\colonequals \{\Or\alpha_s: s \in R/\p\} \quad \text{and} \quad S_2\colonequals  \{\Or\gamma^{-1}\alpha_s\gamma: s \in R/\p\}.$$

 \begin{corollary}
\label{ideals-gens}
         Let $\Or$ be an Eichler order as in  Equation (\ref{eichler}). Then $\Id(\Or;\p)'=S_1 \bigsqcup S_2$.
       \end{corollary}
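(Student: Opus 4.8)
## Proof Plan for Corollary \ref{ideals-gens}

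The plan is to verify the statement via a counting argument combined with explicit checks on the generators. The key facts are: by Proposition \ref{segments}, $\#\Id(\Or;\p)'=2q$ regardless of whether $n=1$ or $n\ge 2$ (in the $n=1$ case we removed $\rad(\Or)$, in the $n\ge 2$ case the count was already $2q$); each $\alpha_s$ has $\nr(\alpha_s)=\det\sbmat{1&b_s\\0&p}=p$, so each $\Or\alpha_s$ and each $\Or\gamma^{-1}\alpha_s\gamma$ lies in $\Id(\Or;\p)$ by Lemma \ref{normp} (note $\gamma^{-1}\alpha_s\gamma$ has reduced norm $p$ as well, and one checks it lies in $\Or$). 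So it suffices to prove three things: (i) $\#S_1=q$; (ii) $\#S_2=q$; (iii) $S_1\cap S_2=\varnothing$; and (iv) none of these ideals equals $\rad(\Or)$ when $n=1$. Together with the cardinality $2q$ this forces $\Id(\Or;\p)'=S_1\sqcup S_2$.

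For (i), I would show $\Or\alpha_s=\Or\alpha_t \iff s=t$. If $\Or\alpha_s=\Or\alpha_t$ then $\alpha_s\alpha_t^{-1}\in\Or^\times$; computing $\alpha_s\alpha_t^{-1}=\frac1p\sbmat{1&b_s\\0&p}\sbmat{p&-b_t\\0&1}=\sbmat{1&(b_s-b_t)/p\\0&1}$, and this lies in $\Or=\sbmat{R&R\\\p^n&R}$ iff $(b_s-b_t)/p\in R$, i.e. $b_s\equiv b_t\pmod{\p}$, i.e. $s=t$. Geometrically (via Proposition \ref{segments}), the segment for $\Or\alpha_s$ has endpoints $[L_0\alpha_s]$ and $[L_0\gamma\alpha_s]$; since $\alpha_s$ fixes $[L_0\gamma]=X$-neighbors appropriately, these are exactly the $q$ segments obtained by "sliding $XY$ to one side." For (ii), the analogous computation: $\Or\gamma^{-1}\alpha_s\gamma=\Or\gamma^{-1}\alpha_t\gamma \iff \gamma^{-1}\alpha_s\alpha_t^{-1}\gamma\in\Or^\times$, and conjugating $\sbmat{1&(b_s-b_t)/p\\0&1}$ by $\gamma=\sbmat{0&1\\p^n&0}$ gives $\sbmat{1&0\\p^n(b_s-b_t)/p&1}$, which lies in $\Or^\times$ iff $(b_s-b_t)/p\in R$, again iff $s=t$. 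So $\#S_2=q$.

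The main obstacle is (iii), showing $S_1\cap S_2=\varnothing$ — this is where the hypothesis that $\Or$ is \emph{non-maximal} (i.e. $n\ge 1$) is essential. Suppose $\Or\alpha_s=\Or\gamma^{-1}\alpha_t\gamma$. Then $\alpha_s\gamma^{-1}\alpha_t^{-1}\gamma\in\Or^\times$, so its reduced norm is a unit — which it is automatically — so I instead compute the matrix directly: $\alpha_s\cdot(\gamma^{-1}\alpha_t^{-1}\gamma)=\sbmat{1&b_s\\0&p}\cdot\frac1p\sbmat{p&0\\-p^{n}b_t&1}=\frac1p\sbmat{p-p^{n}b_s b_t&b_s\\-p^{n+1}b_t&p}=\sbmat{1-p^{n-1}b_sb_t&b_s/p\\-p^{n}b_t&1}$. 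For this to lie in $\Or=\sbmat{R&R\\\p^n&R}$ we would need $b_s/p\in R$, i.e. $b_s\in\p$, i.e. $s=0$; but then the $(2,1)$-entry $-p^nb_t$ lies in $\p^n\subseteq\p^n$ fine, and the $(1,1)$-entry is $1\in R$ — so the matrix is in $\Or$. However for it to be a \emph{unit} in $\Or$ we need its inverse also in $\Or$; equivalently, using the segment picture is cleaner: by Proposition \ref{segments}, $\Or\alpha_s$ corresponds to a segment obtained from $XY$ by sliding toward the $X$-side, while $\Or\gamma^{-1}\alpha_t\gamma$ corresponds to one obtained by sliding toward the $Y$-side (conjugating by $\gamma$ swaps the roles of $X$ and $Y$). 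When $n\ge 1$ these two families of length-$n$ segments are disjoint: a segment cannot simultaneously be a neighbor-shift of $XY$ on the left and on the right unless $n=1$ and we "invert" $XY$ — but inverting corresponds to $\rad(\Or)$, which we have excluded. I would make the "slide left / slide right" dichotomy precise by identifying, for a segment $ZT$ with $\delta(X,Z)=\delta(Y,T)=1$, whether $Z$ lies on the geodesic ray through $Y$ or not, and checking $S_1$ hits the former configuration and $S_2$ the latter. Finally (iv): $\rad(\Or)=\Or\gamma$ corresponds to the segment $XY$ itself (Corollary \ref{rad}), which has $\delta(X,Z)=\delta(X,X)=0\ne 1$, so it is neither in $S_1$ nor $S_2$; alternatively $\Or\gamma=\Or\alpha_s$ would force $\alpha_s\gamma^{-1}\in\Or^\times$, and a reduced-norm/matrix computation as in Lemma \ref{ideal-order} gives a contradiction for $n\ge 1$. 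Assembling (i)–(iv) with $\#\Id(\Or;\p)'=2q$ completes the proof.
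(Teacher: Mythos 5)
Your proposal is correct and follows essentially the same route as the paper: count $\#\Id(\Or;\p)'=2q$, check that $S_1$ and $S_2$ each contain $q$ distinct ideals, and separate the two families by the left-shift/right-shift dichotomy of segments from Proposition \ref{segments}. Two minor slips that do not affect the argument: the inverse should be $\gamma^{-1}\alpha_t^{-1}\gamma=\tfrac1p\sbmat{1&0\\-b_tp^n&p}$, with which the direct computation in (iii) actually succeeds immediately, since the $(1,1)$-entry of $\alpha_s\gamma^{-1}\alpha_t^{-1}\gamma$ is $(1-b_sb_tp^n)/p\notin R$ for $n\ge 1$; and your ``former/latter'' labels for which of $S_1$, $S_2$ has its near endpoint on the ray through $Y$ are swapped.
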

       
         \begin{proof} 
 First, note that  if two elements in $\Or$ generate different left ideals of $\M_2(R)$, then they generate different left ideals of $\Or$, since  $\Or \alpha=\Or\beta \implies \alpha\beta^{-1} \in \Or^\times \implies \alpha \beta^{-1} \in \GL_2(R) \implies \M_2(R)\alpha=\M_2(R)\beta.$ In particular,  the set  $\{\alpha_s : s\in R/\p\}$ generates $q$ distinct ideals of $\Or$ of reduced norm $\p$.

Now consider  Figure  \ref{generators}. Suppose $XY$ is the segment associated to $\Or$, where $X$ corresponds to the lattice $[L_0]$ and $Y$ to $[L_0\gamma]$. We associate to an ideal $\Or\alpha_s \in S_1$  the segment with endpoints $[L_0\alpha_s]$ and $[L_0\gamma\alpha_s]$. As in the proof of Proposition \ref{segments},  $[L_0\alpha_s]$ is at distance $1$ of $X$, so it is either a vertex to the left of $X$, or just immediately to the right. But one can check that $\alpha_s\gamma_1^{-1} \not\in M_2(R)$, so $[L_0\alpha_s]\ne [L_0\gamma_1]$.  Therefore,  $[L_0\alpha_s]$ is a vertex to the left of $X$.  By Proposition \ref{segments},  the other endpoint  $[L_0\gamma\alpha_s]$ of the segment associated to $\Or\alpha_s$  must be the vertex immediately to the left of $Y$. In other words,  $\Or\alpha_s$ will correspond to a segment obtained by shifting $XY$ to the left.

\begin{figure}
\caption{}
\label{generators}

\begin{center}
\begin{tikzpicture}[scale=0.8,
  grow cyclic,
  level distance=2cm,
  level/.style={
    level distance/.expanded=\ifnum#1>1 \tikzleveldistance/1.5\else\tikzleveldistance\fi,
    nodes/.expanded={\ifodd#1 fill=white\else fill=white\fi}
  },
  level 1/.style={sibling angle=12},
  nodes={circle,draw,inner sep=+0pt, minimum size=5pt},
  ]

\draw[dotted] (2,0) -- (4.9,0);

\path[rotate=180]
  node {}
  child foreach \cntI in {1,...,5} {
    node {}
  };

\path[]
  node {}
  child foreach \cntI in {1} {
    node {}
  };

\path[] (0,0) -- (5,0)
  node {}
  child foreach \cntI in {1} {
    node {}
  };

\draw (5.1,0) -- (6.9,0);

\path[] (5,0) -- (7,0)
  node {}
  child foreach \cntI in {1,...,5} {
    node {}
  };

\node[draw=none,fill=none,above] at (0,0.2)  {\fontsize{8}{6}\selectfont$X$};
\node[draw=none,fill=none,above] at (7,0.2)  {\fontsize{8}{6}\selectfont$Y$};

\node[draw=none,fill=none,below] at (0,-0.1)  {\fontsize{8}{6}\selectfont$[L_0]$};
\node[draw=none,fill=none,below] at (2,0)  {\fontsize{8}{6}\selectfont$[L_0\gamma_1]$};
\node[draw=none,fill=none,below] at (2,0.1)  {\fontsize{8}{6}\selectfont$=[L_0\gamma^{-1}\alpha_s\gamma]$};
\node[draw=none,fill=none,below] at (7,0)  {\fontsize{8}{6}\selectfont$[L_0\gamma]$};
\node[draw=none,fill=none,below] at (-1.7,-0.7)  {\fontsize{8}{6}\selectfont$[L_0\alpha_s]$};
\node[draw=none,fill=none,below] at (5,0.2)  {\fontsize{8}{6}\selectfont$[L_0\gamma_{n-1}]$};
\node[draw=none,fill=none,below] at (5,-0.2)  {\fontsize{8}{6}\selectfont$=[L_0\gamma\alpha_s]$};
\node[draw=none,fill=none,below] at (9,-0.5)  {\fontsize{8}{6}\selectfont$[L_0\alpha_s\gamma]$};

\end{tikzpicture}
\end{center}
\end{figure}

We shift our attention to $S_2$. Since $\nr(\gamma^{-1}\alpha_s\gamma)=p$ and $\gamma^{-1}\alpha_s\gamma \in \Or^\times$, $S_2$ indeed consists of ideals of norm $\p$. Consider the ideal $\Or\gamma^{-1}\alpha_s\gamma$, to which we associate the segment with endpoints $[L_0 \gamma^{-1}\alpha_s\gamma]$ and $[L_0\alpha_s\gamma]$. We claim that this segment is obtained by shifting $XY$ to the right; in particular, we claim that $\delta([L_0], [L_0\alpha_s\gamma])=n+1$. Note that $\alpha_s\gamma=\begin{pmatrix} b_sp^n&1\\p^{n+1}&0\end{pmatrix}=\begin{pmatrix}1&0\\0& p^{n+1} \end{pmatrix}\begin{pmatrix} b_sp^n&1\\ 1&0\end{pmatrix}$. Since the action of $B^\times$ preserves $\delta$, we have $$\delta([L_0], [L_0\alpha_s\gamma])=\delta\left(\left[L_0\begin{pmatrix} b_sp^n&1\\ 1&0\end{pmatrix}^{-1}\right], \left[L_0\begin{pmatrix}1&0\\0& p^{n+1} \end{pmatrix}\right]\right).$$ Since  $\begin{pmatrix} b_sp^n&1\\ 1&0\end{pmatrix}^{-1} \in \GL_2(R)$, and by the definition of $\delta$, we have $$\delta([L_0], [L_0\alpha_s\gamma])=\delta\left([L_0], \left[L_0\begin{pmatrix}1&0\\0& p^{n+1} \end{pmatrix}\right]\right)=n+1.$$
Finally, we show that each element of the form $\gamma^{-1}\alpha_s\gamma$ generates a distinct ideal.  Note that $\Or \gamma^{-1}\alpha_s\gamma=\Or\gamma^{-1}\alpha_r\gamma$ if and only if $ \gamma^{-1}\alpha_s\alpha_r^{-1}\gamma \in \Or^\times$. On the other hand, $\gamma\Or\gamma^{-1}=\Or$, so it follows that $\alpha_s\alpha_r^{-1} \in \Or^\times$, and a simple calculation shows this happens only if $\alpha_s=\alpha_r$.
           \end{proof}

\begin{example}
\label{example1}
Consider the Eichler order $$\Or=\begin{pmatrix} \Z_3&\Z_3\\ 3\Z_3&\Z_3\end{pmatrix}=M_2(\Z_3)\cap \gamma^{-1} M_2(\Z_3) \gamma=\End_R(L_0)\cap \End(L_0 \gamma)$$ where $\gamma=\begin{pmatrix} 0&1\\ 3 &0\end{pmatrix}$. Then $\Or$ corresponds to the segment between the vertices given by $[L_0]$ and $[L_0 \gamma]$ bolded in Figure \ref{figexample1}. Corollary \ref{ideals-gens} gives $$S_1= \left\{ \Or \left(
\begin{array}{cc}
1 & 0\\
0 & 3
\end{array} \right), \Or \left(
\begin{array}{cc}
1 & 1\\
0 & 3
\end{array} \right), \Or \left(
\begin{array}{cc}
1 & 2\\
0 & 3
\end{array} \right) \right \};$$ $$S_2=\left\{ \Or \left(
\begin{array}{cc}
3 & 0\\
0 & 1
\end{array} \right),\Or \left(
\begin{array}{cc}
3 & 0\\
3 & 1
\end{array} \right), \Or \left(
\begin{array}{cc}
3 & 0\\
6 & 1
\end{array} \right) \right\}.$$

We obtain the segment associated to $\Or \left(
\begin{array}{cc}
3 & 0\\
3 & 1
\end{array} \right)=\Or\gamma^{-1}\alpha_1\gamma$ from the action of $\gamma^{-1}\alpha_1\gamma$ on $[L_0]$ and $[L_0\gamma]$, and we get  the endpoints  $[L_0\gamma]$ and $[L_0\alpha_1\gamma]$. Indeed, 
\[[L_0\gamma^{-1}\alpha_1\gamma]=\left[L_0  \left(
\begin{array}{cc}
3 & 0\\
3 & 1
\end{array} \right)\right]=\left[L_0 \left(
\begin{array}{cc}
0 & 1\\
3 & 0
\end{array} \right)\right]=[L_0\gamma],\] where the equalities are coming from the fact that elementary row operations are units in $\M_2(R)=\End_R(L_0)$. The segment associated to $\Or \gamma^{-1}\alpha_1\gamma$ is the dashed segment in Figure \ref{figexample1}.

\end{example}

\begin{figure}
\caption{}
\label{figexample1}
\begin{center}
\begin{tikzpicture}[scale=0.8, every node/.style={circle, draw,fill=white, inner sep=1.8pt}]

\draw (0,0) -- (0,3);
\draw (0,0) -- (0,-3);
\draw[line width=0.5mm] (0,0) -- (3,0);
\draw (0,0) -- (-3,0);
\draw[dashed] (3,0) -- (3,3);
\draw (3,0) -- (3,-3);
\draw (3,0) -- (6,0);

\draw (0,3) -- (0,3.7);
\draw (0,3) -- (0.7,3);
\draw (0,3) -- (-0.7,3);
\draw (-3,0) -- (-3,0.7);
\draw (-3,0) -- (-3,-0.7);
\draw (-3,0) -- (-3.7,0);
\draw (0,-3) -- (0,-3.7);
\draw (0,-3) -- (0.7,-3);
\draw (0,-3) -- (-0.7,-3);

\draw (3,3) -- (3,3.7);
\draw (3,3) -- (3.7,3);
\draw (3,3) -- (2.3,3);
\draw (6,0) -- (6,0.7);
\draw (6,0) -- (6,-0.7);
\draw (6,0) -- (6.7,0);
\draw (3,-3) -- (3,-3.7);
\draw (3,-3) -- (3.7,-3);
\draw (3,-3) -- (2.3,-3);

\node at (0,0) {}; \node at (3,0){}; \node at (-3,0){}; \node at (3,3){}; \node at (3,-3){}; \node at (6,0){}; \node at (0,3){}; \node at (0,-3){};

\node[draw=none,fill=none] at (0.2,-0.3)  {\fontsize{8}{6}\selectfont$\qquad [L_0]$};
\node[draw=none,fill=none] at (3.3,-0.3)  {\fontsize{8}{6}\selectfont$\qquad [L_0\gamma ]$};
\node[draw=none,fill=none] at (0.4,2.7)  {\fontsize{8}{6}\selectfont$\qquad [L_0\alpha_1]$};
\node[draw=none,fill=none] at (3.5,2.7)  {\fontsize{8}{6}\selectfont$\qquad [L_0\alpha_1\gamma]$};
\node[draw=none,fill=none] at (-2.6,-0.3)  {\fontsize{8}{6}\selectfont$\qquad [L_0\alpha_0]$};
\node[draw=none,fill=none] at (6.5,-0.3)  {\fontsize{8}{6}\selectfont$\qquad [L_0\alpha_0\gamma ]$};
\node[draw=none,fill=none] at (0.4,-2.7)  {\fontsize{8}{6}\selectfont$\qquad [L_0\alpha_2$]};
\node[draw=none,fill=none] at (3.5,-2.7)  {\fontsize{8}{6}\selectfont$\qquad [L_0\alpha_2\gamma]$};
\end{tikzpicture}
\end{center}
\end{figure}

\begin{remark}
\label{bijecS1S2} 
Conjugation by $\gamma$ induces a bijection between the sets $S_1$ and $S_2$. Note that since $\gamma^{-1}\Or\gamma=\Or$, conjugating the generators coincides with conjugating the ideals, i.e. $\gamma^{-1}\Or\alpha_s\gamma=\Or\gamma^{-1}\alpha_s\gamma$.
\end{remark}

Define $\phi_\gamma\colon S_2 \rightarrow S_1$ by $$\phi_\gamma( \Or\gamma^{-1}\alpha_s\gamma) \colonequals  \Or\alpha_s.$$ 

We may suspect that $\sigma_\omega$ will fix each of the two sets of ideals determined by $S_1$ and $S_2$. We confirm this in the following corollary.

\begin{lemma}\label{lem4} The sets $S_1$ and $S_2$ are fixed under the permutation $\sigma_\omega$.
\end{lemma}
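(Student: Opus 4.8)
The plan is to use the geometric description from Proposition~\ref{segments} together with the combinatorial picture established in the proof of Corollary~\ref{ideals-gens}. Recall that an ideal $\Or\alpha \in \Id(\Or;\p)'$ corresponds to a segment $ZT$ of length $n$ in $\mathcal{T}_\p$ with $\delta(X,Z) = \delta(Y,T) = 1$, where $XY$ is the segment associated to $\Or$, with $X = [L_0]$ and $Y = [L_0\gamma]$. The proof of Corollary~\ref{ideals-gens} shows that $S_1$ consists precisely of those ideals whose segment is obtained by shifting $XY$ one step ``to the left'' (i.e. $Z$ is a neighbor of $X$ different from the first interior vertex $[L_0\gamma_1]$, and $T = [L_0\gamma_{n-1}]$), while $S_2$ consists of those ideals whose segment is obtained by shifting $XY$ one step ``to the right'' (so that $\delta([L_0], [L_0\alpha_s\gamma]) = n+1$). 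These are the only two ways of producing an admissible segment when $n \geq 2$, and together with $\rad(\Or)$ they exhaust $\Id(\Or;\p)$ when $n = 1$; this is exactly the disjoint-union decomposition. So $S_1$ and $S_2$ are distinguished \emph{intrinsically} by the position of the corresponding segment relative to $XY$.

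First I would record the key invariant: for a segment $ZT$ associated to an ideal in $\Id(\Or;\p)'$, with $Z$ the endpoint near $X$ and $T$ the endpoint near $Y$, the ideal lies in $S_1$ exactly when $Z$ is \emph{not} on the geodesic ray from $Y$ through $X$ extended one step, equivalently when the segment lies on the ``$X$-side'' — and dually for $S_2$. More usefully: $\Or\alpha \in S_1$ iff $\delta([L_0\gamma], [L_0\alpha]) = n+1$ (the $\gamma$-endpoint is pushed \emph{away}), while $\Or\alpha \in S_2$ iff $\delta([L_0], [L_0\alpha\gamma]) = n+1$. Then I would invoke Corollary~\ref{extendedaction}: if $P = \Or\alpha$, then $\sigma_\omega(P)$ corresponds to the segment associated to $\omega^{-1}\alpha^{-1}\Or\alpha\omega$, which by Proposition~\ref{segments} has endpoints $[L_0\alpha\omega]$ and $[L_0\gamma\alpha\omega]$. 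Since $\omega \in \Or^\times$, Lemma~\ref{wfix} applies to both maximal orders $\Or_1 = \M_2(R) = \End_R(L_0)$ and $\Or_2 = \gamma^{-1}\M_2(R)\gamma = \End_R(L_0\gamma)$: thus $\omega$ fixes both vertices $X = [L_0]$ and $Y = [L_0\gamma]$ of $\mathcal{T}_\p$, and being an isometry of the tree it fixes the entire segment $XY$ pointwise.

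The heart of the argument is then that right multiplication by $\omega$, as an isometry of $\mathcal{T}_\p$ fixing $XY$ pointwise, cannot move the neighbor $Z$ of $X$ to the first interior vertex $[L_0\gamma_1]$ of $XY$ (since that vertex is already fixed and $\omega$ is a bijection, while $Z \notin XY$), and likewise cannot move it onto the $Y$-side — so the image segment still has its $X$-endpoint strictly off $XY$ on the same side, and the $Y$-endpoint at the correct interior vertex. Concretely, since $\omega$ fixes every vertex of $XY$ and $Z$ is adjacent to $X$ with $Z \notin XY$, the image $Z\omega$ is again adjacent to $X$, distinct from $[L_0\gamma_1]$, hence still ``to the left''; and the other endpoint maps to a vertex adjacent to $Y$, which by the length-$n$ and distance-$1$ constraints of Proposition~\ref{segments} is forced to be the interior vertex $[L_0\gamma_{n-1}]$ again. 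This shows $\sigma_\omega(\Or\alpha) \in S_1$ whenever $\Or\alpha \in S_1$; the argument for $S_2$ is symmetric, using instead that the $Y$-endpoint gets pushed one step past $Y$ and that $\omega$ fixes $Y$. Since $\sigma_\omega$ is a permutation of $\Id(\Or;\p)'$ and maps each of $S_1$, $S_2$ into itself, it permutes each set. The main obstacle is purely bookkeeping: making the ``same side'' claim rigorous requires being careful that a tree isometry fixing a segment pointwise permutes the neighbors of each endpoint that lie off the segment, which is immediate from $(q+1)$-regularity and injectivity, but must be stated cleanly rather than waved at.
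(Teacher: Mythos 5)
Your proposal is correct and follows essentially the same route as the paper: identify $S_1$ and $S_2$ with the segments obtained by shifting $XY$ left and right respectively, use Lemma~\ref{wfix} to see that $\omega\in\Or^\times$ fixes every vertex of $XY$, and conclude that the free endpoint of each segment must land on another admissible neighbor on the same side. The paper's version is just a terser statement of the same distance-preservation argument.
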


            \begin{proof}
            Refer back to Figure \ref{generators}, where $XY$ is the segment associated to $\Or$ with endpoints $[L_0]$ and $[L_0\gamma]$. Consider the ideal  $\Or \alpha_s$. Since we obtain the segment associated to $\Or\alpha_s$ by shifting $XY$ one unit to the left, this segment has endpoints $[L_0\alpha_s]$ and $[L_0\gamma_{n-1}]$. By Lemma \ref{wfix}, $\omega \in \Or^\times$ acts on the tree $\mathcal{T}_\p$ by fixing all the vertices contained on $XY$, and therefore all the vertices on the segment associated to $\Or\alpha_s$ besides $[L_0\alpha_s]$. Since this action preserves $\delta$, $[L_0\alpha_s\omega]$ must be at  distance $1$ of $[L_0]$, and the only allowed choices for $[L_0\alpha_s\omega]$ are the set $\{[L_0\alpha_r]\}$. 

 The statement for $S_2$ follows analogously, this time associating to the ideal $\Or\gamma^{-1}\alpha_s\gamma$ the segment  obtained by shifting $XY$ to the right. 
          \end{proof}

This means that we may view $\sigma_\omega \in \Sym(S_1) \times \Sym(S_2)$.

Consider the set of ideals of $\M_2(R)$ given by \[\Id(\M_2(R);\p)'\colonequals \{\M_2(R)\alpha_s\} =  \Id(\M_2(R);\p) \setminus \{M_2(R)\gamma_1\}.\]  Define the permutation $\tau_\omega$ by $\tau_\omega(P)=P\omega$.  Note that $\M_2(R)\gamma_1$ corresponds to the vertex $[L_0\gamma_1]$, which is to the immediate right of $X$. By Lemma \ref{wfix}, any $\omega \in \Or^\times$ fixes $[L_0\gamma_1]$, so $\tau_\omega$  gives a permutation of $\Id(\M_2(R);\p)'$.

 The ideals in $S_1$ exactly correspond to those in $\Id(\M_2(R);\p)'$ via the bijection \[ \begin{array}{clll}\varphi \colon & S_1 & \rightarrow & \Id(\M_2(R);\p)' \\ & \Or \alpha_s & \mapsto &\M_2(R)\alpha_s \end{array}.\]

\begin{theorem}\label{thm4}
We may understand the permutation $\sigma_\omega$ by computing $\sigma_\omega|_{S_1}$ and $\sigma_\omega|_{S_2}$ separately, via the following diagrams, which commute.
\begin{enumerate}

\item[\rm{a.}] $$\xymatrix{
S_2 \ar^{\sigma_\omega|_{S_2}\hspace{.1in}}[r]\ar_{\phi_{\gamma}}@{^{}->}[d]&{S_2}\ar^{\phi_{\gamma}}[d]\\
{S_1}\ar^{\sigma_{\gamma^{-1}\omega \gamma}|_{S_1}\hspace{0in}}[r]&{S_1}
}$$

\item[\rm{b.}] $$\xymatrix{
S_1 \ar^{\sigma_\omega|_{S_1}\hspace{.1in}}[r]\ar_{\varphi}@{^{}->}[d]&{S_1}\ar^{\varphi}[d]\\
{\Id(\M_2(R);\p)'}\ar^{\tau_\omega\hspace{0in}}[r]&{\Id(\M_2(R);\p)'}
}$$

\item[\rm{c.}] $$ \xymatrix{
S_2 \ar^{\sigma_\omega|_{S_2}\hspace{.1in}}[r]\ar_{\varphi \circ \phi_{\gamma}}@{^{}->}[d]&{S_2}\ar^{\varphi \circ \phi_{\gamma}}[d]\\
{\Id(\M_2(R);\p)'}\ar^{\tau_{\gamma^{-1}\omega\gamma}\hspace{0in}}[r]&{\Id(\M_2(R);\p)'}
}$$

\end{enumerate}
\end{theorem}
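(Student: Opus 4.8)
The plan is to verify each of the three commuting squares directly by chasing a generator around the diagram, using the explicit descriptions of $\phi_\gamma$, $\varphi$, and the fact (from Lemma \ref{lem4} and Remark \ref{bijecS1S2}) that $\sigma_\omega$ preserves $S_1$ and $S_2$ and that conjugation by $\gamma$ intertwines the action on generators with the action on ideals. Since (c) is just (a) followed by (b), the real content is in establishing (a) and (b); I will prove those two and then note that (c) follows by pasting the two squares.

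\emph{Square (a).} Take a generic element $\Or\gamma^{-1}\alpha_s\gamma \in S_2$. Going right then down: $\sigma_\omega(\Or\gamma^{-1}\alpha_s\gamma) = \Or\gamma^{-1}\alpha_s\gamma\omega$, which by Lemma \ref{lem4} lies in $S_2$, say it equals $\Or\gamma^{-1}\alpha_t\gamma$; then $\phi_\gamma$ sends it to $\Or\alpha_t$. Going down then right: $\phi_\gamma(\Or\gamma^{-1}\alpha_s\gamma) = \Or\alpha_s$, and $\sigma_{\gamma^{-1}\omega\gamma}(\Or\alpha_s) = \Or\alpha_s\gamma^{-1}\omega\gamma$. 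So I must show $\Or\gamma^{-1}\alpha_t\gamma = \Or\gamma^{-1}\alpha_s\gamma$ iff $\Or\alpha_t = \Or\alpha_s\gamma^{-1}\omega\gamma$. Starting from $\Or\gamma^{-1}\alpha_s\gamma\omega = \Or\gamma^{-1}\alpha_t\gamma$, conjugate the whole equation by $\gamma$: since $\gamma\Or\gamma^{-1} = \Or$ (Remark \ref{bijecS1S2}), we get $\Or\alpha_s\gamma\omega\gamma^{-1} = \Or\alpha_t$. This is not quite $\sigma_{\gamma^{-1}\omega\gamma}$ — I need $\gamma^{-1}\omega\gamma$ on the right, not $\gamma\omega\gamma^{-1}$. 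The fix is to observe that $\gamma^2 = p\cdot\mathrm{Id}$ (reading off the matrix $\gamma = \sbmat{0&1\\p^n&0}$... actually $\gamma^2 = p^n\cdot\mathrm{Id}$), so $\gamma\omega\gamma^{-1} = \gamma^{-1}(\gamma^2)\omega(\gamma^2)^{-1}\gamma = \gamma^{-1}\omega\gamma$ because $\gamma^2$ is central. Hence $\Or\alpha_s\gamma\omega\gamma^{-1} = \Or\alpha_s\gamma^{-1}\omega\gamma$, and the square commutes. (One should double-check the conjugation direction of $\phi_\gamma$ against this identity; if needed, absorb the scalar $p^n$ into $\Or^\times$ since $R^\times \subseteq \Or^\times$ and scalars act trivially on ideals.)

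\emph{Square (b).} Take $\Or\alpha_s \in S_1$. Going right then down: $\sigma_\omega(\Or\alpha_s) = \Or\alpha_s\omega \in S_1$; by Lemma \ref{lem4} and the first paragraph of its proof, $[L_0\alpha_s\omega]$ is one of the vertices $[L_0\alpha_r]$ to the left of $X$, i.e. $\Or\alpha_s\omega = \Or\alpha_t$ for a unique $t$, and $\varphi$ sends this to $\M_2(R)\alpha_t$. Going down then right: $\varphi(\Or\alpha_s) = \M_2(R)\alpha_s$, and $\tau_\omega(\M_2(R)\alpha_s) = \M_2(R)\alpha_s\omega$. So I need: $\Or\alpha_s\omega = \Or\alpha_t$ implies $\M_2(R)\alpha_s\omega = \M_2(R)\alpha_t$. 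But this is immediate from the first observation in the proof of Corollary \ref{ideals-gens}: if two elements of $\Or$ generate the same left $\Or$-ideal, their difference-quotient lies in $\Or^\times \subseteq \GL_2(R)$, so they generate the same left $\M_2(R)$-ideal. The only thing to check is that $\tau_\omega$ is genuinely well-defined on $\Id(\M_2(R);\p)'$, i.e. that $\M_2(R)\alpha_t \neq \M_2(R)\gamma_1$ — which holds because $[L_0\alpha_t]$ is a vertex strictly to the left of $X = [L_0]$ while $[L_0\gamma_1]$ is to its right, as already noted in the discussion preceding the theorem.

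\emph{Square (c) and the main obstacle.} Square (c) is obtained by stacking square (a) on top of square (b): the left vertical of (c) is $\varphi\circ\phi_\gamma$, and $\tau_{\gamma^{-1}\omega\gamma}$ is exactly the bottom map of (b) with $\omega$ replaced by $\gamma^{-1}\omega\gamma$, which is the bottom map of (a). So commutativity of (c) is a formal consequence of (a) and (b). The main obstacle is bookkeeping in square (a): getting the direction of conjugation to match between $\phi_\gamma$, the relation $\sigma_\omega(P) = P\omega$, and the appearance of $\gamma^{-1}\omega\gamma$ versus $\gamma\omega\gamma^{-1}$ on the bottom edge. This is resolved cleanly once one records that $\gamma^2 = p^n\cdot\mathrm{Id}$ is central, so the two conjugates agree as operators on left ideals; I expect this to be the one spot where a sign/direction slip is easy, so I would state that identity as a short preliminary remark before chasing the diagram.
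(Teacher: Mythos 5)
Your proposal is correct and follows essentially the same route as the paper: a direct diagram chase on generators for (a) and (b), with (c) obtained by pasting the two squares. The one point you make explicit that the paper leaves implicit is that $\gamma^2 = p^n\cdot\mathrm{Id}$ is central, so conjugation by $\gamma$ and by $\gamma^{-1}$ agree on ideals; this is exactly what reconciles the definition $\phi_\gamma(\Or\gamma^{-1}\alpha_s\gamma)=\Or\alpha_s$ with the formula $\phi_\gamma(\Or\delta)=\Or\gamma^{-1}\delta\gamma$ used in the paper's computation, and recording it as a preliminary remark is a reasonable (if minor) improvement in rigor.
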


\begin{proof}

To show that (a) commutes, let $\Or\beta \in S_2$. Then, $\phi_{\gamma_n} \circ \sigma_\omega(\Or\beta)=\phi_{\gamma}(\Or\beta \omega)=\Or\gamma^{-1}\beta \omega \gamma$ and $\sigma_{\gamma^{-1}\omega\gamma} \circ \phi_{\gamma}(\Or\beta)=\sigma_{\gamma^{-1}\omega\gamma} (\Or\gamma^{-1}\beta \gamma)=\Or\gamma^{-1}\beta \gamma\gamma^{-1}\omega \gamma=\Or\gamma^{-1}\beta \omega \gamma=\phi_{\gamma^{-1}\omega\gamma} \circ \sigma_\omega(\Or\beta)$.

For (b), let $\Or \alpha \in S_1$. Then, $\varphi \circ \sigma_\omega(\Or\alpha)=\M_2(R)\alpha \omega$, and $\tau_\omega \circ \varphi(\Or\alpha)=\M_2(R)\alpha_\omega$. 

The diagram in (c) is a composition of the first diagram and that in Theorem \ref{thm4}, so commutativity follows. \end{proof}

This means that locally, we may write the cycle structure of $\sigma_\omega$ in an Eichler order in terms of the cycle structure of two separate permutations given by metacommutation in $\M_2(R)$.

We may then define the following maps:  

\begin{align*} \sigma \colon \Or^\times &\rightarrow \Sym(\Id(\Or;\p)')\\
\omega &\mapsto \sigma_\omega
\end{align*}

\begin{align*}
\tau \times \tau^\gamma \colon \Or^\times & \rightarrow \Sym(\Id(\M_2(R)); \p) \times \Sym(\Id(\M_2(R)); \p)\\
\omega & \mapsto (\tau_\omega, \tau_{\gamma^{-1}\omega \gamma}).
\end{align*}

Then, Theorem \ref{thm4} is summarized by the following commutative diagram, which gives a complete description of $\sigma_\omega$ in terms of the corresponding cycle structures in the maximal order $\M_2(R)$, which is known (see Forsyth--Gurev--Shrima \cite{FGS} or Chari \cite{SC}). 

\begin{corollary}\label{cor1} 
There is an embedding $$\iota \colon \Sym(\Id(\M_2(R)'); \p) \times \Sym(\Id(\M_2(R)'); \p) \rightarrow \Sym(\Id(\Or;\p)')$$ such that $$\sigma(\omega)=\iota \circ (\tau \times \tau^{(\gamma)})(\omega);$$  i.e., the following diagram commutes:

$$\xymatrix@R=8pt{
{}&{\Sym(\Id(\M_2(R);\p)')\times \Sym(\Id(\M_2(R);\p)')}\ar_{\iota}@{^{(}->}[dd]\\
{\Or^\times}\ar_{\tau \times \tau^{(\gamma)}}[ur]\ar^{\sigma}[dr]\\
&{\Sym(\Id(\Or;\p)')}
}$$
In other words, 

\begin{enumerate}
\item $\omega$ permutes the elements of $S_1$ by $$\tau_\omega(\M_2(R)\alpha)=\sigma_\omega(\Or\alpha)$$ as in the maximal case.
\item $\omega$ permutes the elements of $S_2$ by $$\tau_\omega(\M_2(R)\gamma^{-1}\alpha\gamma)=\gamma^{-1}\sigma_{\gamma^{-1}\omega\gamma}(\Or\alpha)\gamma$$ again as in the maximal case, since $\Or\gamma^{-1}\alpha \gamma \in S_2$.
\end{enumerate}

%

\end{corollary}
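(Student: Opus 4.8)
The plan is to build the embedding $\iota$ by hand from the bijections produced in Theorem~\ref{thm4} and then verify that it is an injective group homomorphism making the triangle commute; since the geometric content has already been extracted in Lemma~\ref{lem4} and Theorem~\ref{thm4}, what remains is a clean packaging of that data.

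First I would record the identification of the underlying sets. By Corollary~\ref{ideals-gens} there is a disjoint-union decomposition $\Id(\Or;\p)'=S_1\sqcup S_2$, and the maps $\varphi\colon S_1\to\Id(\M_2(R);\p)'$ and $\varphi\circ\phi_\gamma\colon S_2\to\Id(\M_2(R);\p)'$ are bijections. Gluing these yields a bijection $\Psi\colon \Id(\Or;\p)'\to \Id(\M_2(R);\p)'\sqcup\Id(\M_2(R);\p)'$ which sends $S_1$ into the first copy via $\varphi$ and $S_2$ into the second via $\varphi\circ\phi_\gamma$. I then define
$$\iota(\rho_1,\rho_2)\colonequals \Psi^{-1}\circ(\rho_1\sqcup\rho_2)\circ\Psi,$$
where $\rho_1\sqcup\rho_2$ denotes the permutation of the disjoint union that acts by $\rho_1$ on the first copy and by $\rho_2$ on the second. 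Concretely, $\iota(\rho_1,\rho_2)$ restricts to $\varphi^{-1}\rho_1\varphi$ on $S_1$ and to $(\varphi\circ\phi_\gamma)^{-1}\rho_2(\varphi\circ\phi_\gamma)$ on $S_2$, and is a well-defined element of $\Sym(\Id(\Or;\p)')$ because $S_1$ and $S_2$ are disjoint with union $\Id(\Or;\p)'$. That $\iota$ is an injective homomorphism is then formal: $(\rho_1,\rho_2)\mapsto\rho_1\sqcup\rho_2$ is a homomorphism into $\Sym(\Id(\M_2(R);\p)'\sqcup\Id(\M_2(R);\p)')$ since the two factors act on disjoint sets, conjugation by the fixed bijection $\Psi$ is an isomorphism of symmetric groups, and if $\iota(\rho_1,\rho_2)=\id$ then restricting to $S_1$ forces $\rho_1=\id$ and restricting to $S_2$ forces $\rho_2=\id$.

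Next I would check the identity $\sigma(\omega)=\iota\bigl(\tau_\omega,\tau_{\gamma^{-1}\omega\gamma}\bigr)$ by comparing the two permutations on $S_1$ and on $S_2$ separately, which is legitimate because $\sigma_\omega$ preserves each of $S_1,S_2$ by Lemma~\ref{lem4}. On $S_1$, Theorem~\ref{thm4}(b) gives $\varphi\circ\sigma_\omega|_{S_1}=\tau_\omega\circ\varphi$, i.e.\ $\sigma_\omega|_{S_1}=\varphi^{-1}\tau_\omega\varphi$, which is precisely $\iota(\tau_\omega,\tau_{\gamma^{-1}\omega\gamma})|_{S_1}$. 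On $S_2$, Theorem~\ref{thm4}(c) gives $\varphi\circ\phi_\gamma\circ\sigma_\omega|_{S_2}=\tau_{\gamma^{-1}\omega\gamma}\circ\varphi\circ\phi_\gamma$, i.e.\ $\sigma_\omega|_{S_2}=(\varphi\circ\phi_\gamma)^{-1}\tau_{\gamma^{-1}\omega\gamma}(\varphi\circ\phi_\gamma)$, which is precisely $\iota(\tau_\omega,\tau_{\gamma^{-1}\omega\gamma})|_{S_2}$. This proves $\sigma=\iota\circ(\tau\times\tau^{(\gamma)})$ and hence the commutativity of the stated triangle. The explicit formulas (1) and (2) are just these two identities rewritten on the generators $\M_2(R)\alpha$ and $\M_2(R)\gamma^{-1}\alpha\gamma$, using Remark~\ref{bijecS1S2} in (2) to pass between conjugating a generator and conjugating the ideal it generates.

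I do not expect a genuine obstacle here, as the corollary is essentially a restatement of Theorem~\ref{thm4} in group-theoretic language; the only points requiring care are bookkeeping ones. One should confirm that $\iota$ is independent of the choice of coset representatives $b_s$ defining the $\alpha_s$ — which holds because $S_1$, $S_2$, $\varphi$, and $\phi_\gamma$ are all defined at the level of ideals rather than of representatives — and one must track the direction of conjugation in the definition of $\tau$ (from Theorem~\ref{thm2}) consistently through $\varphi$ and $\phi_\gamma$ so that the composite with $\Psi$ lands where intended.
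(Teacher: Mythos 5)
Your proposal is correct and follows essentially the same route as the paper, which states Corollary~\ref{cor1} as an immediate repackaging of Theorem~\ref{thm4} without a separate written proof: the embedding $\iota$ is conjugation by the glued bijection $\varphi \sqcup (\varphi\circ\phi_\gamma)$, and the commutativity on $S_1$ and $S_2$ is exactly parts (b) and (c) of Theorem~\ref{thm4} together with Lemma~\ref{lem4}. Your explicit construction of $\Psi$ and the verification that $\iota$ is an injective homomorphism simply make precise what the paper leaves implicit.
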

%
%
%
\begin{example}\label{ex2}

Let $n=1$, $p=3$, and choose $\omega=\left(
\begin{array}{cc}
1 & 1\\
0 & 1 
\end{array} \right)$, so $\gamma_1^{-1}\omega\gamma_1=\left(
\begin{array}{cc}
1 & 0\\
3 & 1
\end{array} \right)$. Using Theorem \ref{thm2}, we have
 \[
\begin{array}{rrr} \sigma_\omega(\Or\alpha_0)=\Or \alpha_1, &\sigma_\omega(\Or \alpha_1)=\alpha_2,& \sigma_\omega(\Or \alpha_2)=\Or \alpha_1, \\ 
 \sigma_{\gamma_1^{-1}\omega\gamma}(\Or\alpha_0)=\Or \alpha_0,& \sigma_{\gamma_1^{-1}\omega\gamma}(\Or \alpha_1)=\alpha_1, & \sigma_{\gamma_1^{-1}\omega\gamma}(\Or \alpha_2)=\Or \alpha_2.\end{array}
\] 
 Via the correspondence of $S_1$ and $S_2$ in Remark \ref{bijecS1S2} and Theorem \ref{thm4}(c), one may verify that we get $$\sigma_\omega(\Or\beta_1)=\phi_{\gamma_1}^{-1}(\sigma_{\gamma^{-1}\omega\gamma_1}(\Or\alpha_1))=\phi_{\gamma_1}^{-1}(\Or\alpha_1)=\Or\beta_1;$$
$$\sigma_\omega(\Or\beta_2)=\phi_{\gamma_1}^{-1}(\sigma_{\gamma_1^{-1}\omega\gamma}(\Or\alpha_2))=\phi_{\gamma_1}^{-1}(\Or\alpha_2)=\Or\beta_2;$$ $$\sigma_\omega(\Or\beta^0)=\phi_{\gamma_1}^{-1}(\sigma_{\gamma_1^{-1}\omega\gamma}(\Or\alpha_0))=\phi_{\gamma_1}^{-1}(\Or\alpha_0)=\Or\beta^0.$$
In other words, $\sigma_\omega$ permutes the ideals in $S_1$ by a $3$-cycle, and fixes each ideal in $S_2$.
\end{example}

\section{Conclusion}

We conclude with a more detailed description of the cycle structure of $\sigma_\omega$. Again, suppose that $\Or$ is an Eichler order of level $\p^n$ with $\omega \in \Or^\times$.  We now have a description of the cycle structures of the permutation $\sigma_{\omega}$ on $S_1$ and $S_2$, and hence on $\Id(\Or;\p)$ in terms of the corresponding cycle structures of the permutations $\tau_\omega$ and $\tau_{\gamma^{-1}\omega\gamma}$ of $\Id(\M_2(R);\p)'$. We conclude with a brief discussion of the image and kernel of the map $\sigma$ in Corollary \ref{cor1}. We begin with a lemma about the relationship between $\omega$ and $\gamma^{-1}\omega\gamma$. 

\begin{lemma}\label{lem5}
If $\omega \in \Or^\times$ with $\omega \equiv a \pmod \p$ is a scalar matrix $\pmod \p$,  then $\gamma^{-1}\omega\gamma \equiv \left(
\begin{array}{cc}
a & c\\
0 & a
\end{array} \right) \pmod \p$, where $c$ is the coefficient of $p^n$ in the bottom left entry of $\omega$.
\end{lemma}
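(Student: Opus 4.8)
The plan is to prove the lemma by an explicit $2\times 2$ matrix computation, conjugating $\omega$ by the anti-diagonal matrix $\gamma=\gamma_n$.

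First I would use $\omega\in\Or^\times\subseteq\Or=\begin{pmatrix} R & R \\ \p^n & R\end{pmatrix}$ to write $\omega=\begin{pmatrix} w_{11} & w_{12} \\ p^n w_{21} & w_{22}\end{pmatrix}$ with all $w_{ij}\in R$; the bottom-left entry lies in $\p^n=(p^n)$, so $w_{21}\in R$ is uniquely determined (as $R$ is a domain) and is by definition the quantity $c$. The hypothesis that $\omega$ reduces to the scalar matrix $a$ modulo $\p$ translates into the congruences $w_{11}\equiv w_{22}\equiv a\pmod{\p}$ (the condition $w_{12}\equiv 0\pmod{\p}$ will not even be needed).

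Next I would record $\gamma=\begin{pmatrix} 0 & 1 \\ p^n & 0\end{pmatrix}$ and its inverse $\gamma^{-1}=\begin{pmatrix} 0 & p^{-n} \\ 1 & 0\end{pmatrix}$, and multiply out. A direct calculation gives
\[ \gamma^{-1}\omega\gamma=\begin{pmatrix} w_{22} & w_{21} \\ p^n w_{12} & w_{11}\end{pmatrix}, \]
where the $p^{-n}$ in $\gamma^{-1}$ cancels against the $p^n$ entries so that the result is integral, consistent with $\gamma$ normalizing $\Or$ (as used in the proof of Lemma \ref{ideal-order}). Reducing modulo $\p$ and using $n\ge 1$ so that $p^n w_{12}\in\p$, we obtain $\gamma^{-1}\omega\gamma\equiv\begin{pmatrix} a & c \\ 0 & a\end{pmatrix}\pmod{\p}$, which is the claim.

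There is essentially no obstacle here: the only point requiring care is the bookkeeping of which entry of $\omega$ lands in which position after conjugation by the anti-diagonal $\gamma$, together with the observation that the bottom-left entry of the conjugate vanishes modulo $\p$ precisely because $n\ge 1$, while the top-right entry picks up exactly the coefficient $c$ of $p^n$ from the bottom-left entry of $\omega$.
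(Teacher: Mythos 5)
Your proof is correct and follows essentially the same route as the paper's: write out $\omega$ explicitly as an element of $\Or$ and conjugate by the anti-diagonal $\gamma$ directly. Your bookkeeping is in fact slightly more careful than the paper's (the two diagonal entries swap under conjugation, and the hypothesis $w_{12}\equiv 0\pmod\p$ is indeed not needed since $n\ge 1$ already forces $p^n w_{12}\in\p$), but since both diagonal entries are congruent to $a$ modulo $\p$, the conclusion is the same.
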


\begin{proof}

If $\omega \equiv a \pmod \p$, then we may write $\omega=\left(
\begin{array}{cc}
a+k_1p & bp\\
cp^n & a+k_2p
\end{array} \right)$. A quick computation shows that $\gamma^{-1}\omega \gamma=\left(
\begin{array}{cc}
a+k_1p & c\\
bp^{n+1} & a+k_2p
\end{array} \right) \equiv \left(
\begin{array}{cc}
a & c\\
0 & a 
\end{array} \right) \pmod \p$.
\end{proof}

Now, by Forsyth--Gurev--Shrima \cite{FGS}, all cycles of $\tau_\omega$ are the same size, and the length is the multiplicative order of $\omega+\p$ in $\PGL_2(R/\p)$. Since $\sigma_\omega|_{S_1}$ has the same cycle structure as $\tau_{\omega}$ and $\sigma_\omega|_{S_2}$ has the same cycle structure of $\tau_{\gamma^{-1}\omega\gamma}$, it is of interest to compare the cycle structure of $\tau_{\omega}$ and $\tau_{\gamma^{-1}\omega\gamma}$. In other words, we seek to compare the multiplicative orders of $\omega+\p$ and $\gamma^{-1}\omega\gamma+\p$ in $\PGL_2(R/\p)$. 

\begin{lemma}\label{lem6}
The permutations $\tau_\omega$ and $\tau_{\gamma^{-1}\omega\gamma}$ have the same number of fixed points, unless one of $\omega$ and $\gamma^{-1}\omega\gamma$ is a scalar matrix $\pmod \p$. Moreover, they each have at most $1$ fixed point when neither is a scalar $\pmod \p$. 
\end{lemma}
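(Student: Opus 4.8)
The plan is to transfer the statement to the elementary setting of $2\times 2$ upper-triangular matrices acting on $\mathbb{P}^1(\F_q)$. Recall that a left ideal $\M_2(R)\alpha$ with $\nr(\alpha)=p$ satisfies $pL_0 \subsetneq L_0\alpha \subsetneq L_0$, so $\overline{L_0\alpha}\colonequals (L_0\alpha + pL_0)/pL_0$ is a line in $L_0/pL_0 \cong \F_q^2$; the assignment $\M_2(R)\alpha \mapsto \overline{L_0\alpha}$ is a bijection $\Id(\M_2(R);\p) \overset{\sim}{\longrightarrow} \mathbb{P}^1(\F_q)$, under which $\M_2(R)\alpha_s \mapsto [1:s]$ (since $L_0\alpha_s = R(1,b_s)+R(0,p)$) and $\M_2(R)\gamma_1 \mapsto [0:1]$ (since $L_0\gamma_1 = R(0,1)+R(p,0)$); thus $\Id(\M_2(R);\p)'$ is identified with $\mathbb{P}^1(\F_q)\setminus\{[0:1]\}$. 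Since $\Or^\times \subseteq \GL_2(R)$, the reduction $\bar\omega$ lies in $\GL_2(\F_q)$, and $(L_0\alpha\omega + pL_0)/pL_0 = \overline{L_0\alpha}\cdot\bar\omega$, so $\tau_\omega$ corresponds to right multiplication of lines by $\bar\omega$; its fixed points are exactly the left eigenlines of $\bar\omega$, and likewise $\tau_{\gamma^{-1}\omega\gamma}$ corresponds to $\overline{\gamma^{-1}\omega\gamma}$. It therefore suffices to compare the numbers of eigenlines of $\bar\omega$ and of $\overline{\gamma^{-1}\omega\gamma}$ lying in $\mathbb{P}^1(\F_q)\setminus\{[0:1]\}$.

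Next I would record two facts. Since $\omega \in \Or$ (and $\gamma^{-1}\omega\gamma \in \gamma^{-1}\Or\gamma = \Or$, as $\gamma$ normalizes $\Or$), both reductions are upper triangular; writing $\omega \equiv \left(\begin{smallmatrix} a & b \\ 0 & d\end{smallmatrix}\right) \pmod \p$, a direct computation (cf.\ Lemma \ref{lem5}) gives $\gamma^{-1}\omega\gamma \equiv \left(\begin{smallmatrix} d & c \\ 0 & a\end{smallmatrix}\right)\pmod \p$, so $\bar\omega$ and $\overline{\gamma^{-1}\omega\gamma}$ share the characteristic polynomial $(X-a)(X-d)$, which splits over $\F_q$. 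Moreover, being upper triangular, each of these matrices has $(0,1)$ as a left eigenvector, so $[0:1]$ --- precisely the point removed in forming $\Id(\M_2(R);\p)'$ --- is a fixed point of both $\tau_\omega$ and $\tau_{\gamma^{-1}\omega\gamma}$. Hence for $M \in \{\bar\omega, \overline{\gamma^{-1}\omega\gamma}\}$ the number of fixed points of the associated permutation on $\Id(\M_2(R);\p)'$ equals the number of eigenlines of $M$ in $\mathbb{P}^1(\F_q)$ minus one.

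Now I would invoke the elementary count of eigenlines of $M \in \GL_2(\F_q)$ on $\mathbb{P}^1(\F_q)$: there are $q+1$ of them if $M$ is scalar, and at most $2$ otherwise, with exactly $1$ when $M$ is non-scalar with a repeated eigenvalue in $\F_q$. Applying this to $\bar\omega$ and $\overline{\gamma^{-1}\omega\gamma}$, which share a characteristic polynomial splitting over $\F_q$: if its roots are distinct, each matrix has exactly $2$ eigenlines; if the root is repeated and neither matrix is scalar, each has exactly $1$; the only remaining possibility is a repeated root with exactly one of the two matrices scalar. Subtracting the common fixed point $[0:1]$, we conclude that $\tau_\omega$ and $\tau_{\gamma^{-1}\omega\gamma}$ have the same number of fixed points except possibly when one of $\omega$, $\gamma^{-1}\omega\gamma$ is scalar $\pmod \p$, which is the first assertion. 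For the ``moreover'' clause, if neither is scalar $\pmod \p$ then each of $\bar\omega$, $\overline{\gamma^{-1}\omega\gamma}$ is non-scalar, hence has at most $2$ eigenlines in $\mathbb{P}^1(\F_q)$, so at most $2-1=1$ fixed point survives in $\Id(\M_2(R);\p)'$.

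I do not expect a genuine obstacle; the only step demanding care is the identification in the first paragraph --- pinning down the bijection $\Id(\M_2(R);\p)' \leftrightarrow \mathbb{P}^1(\F_q)\setminus\{[0:1]\}$ and checking that the excluded ideal $\M_2(R)\gamma_1$ is exactly the point $[0:1]$ that every upper-triangular matrix fixes, so that the ``$-1$'' correction is uniform across all cases. After that, the whole statement reduces to the linear algebra of $2\times 2$ upper-triangular matrices over $\F_q$.
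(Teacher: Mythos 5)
Your proof is correct. It reaches the same conclusion as the paper but by a more self-contained route: the paper's own proof simply quotes the Cohn--Kumar/Chari fixed-point formula --- after discarding $\M_2(R)\gamma_1$, the number of fixed points of $\tau_\omega$ for non-scalar $\omega$ is the Legendre symbol $\left(\frac{\trd(\omega)^2-4\nrd(\omega)}{p}\right)$ --- and then observes that this discriminant is conjugation-invariant, so the count agrees for $\omega$ and $\gamma^{-1}\omega\gamma$, and that it is congruent to $(a-d)^2 \pmod \p$, a square, so the symbol is $0$ or $1$. You instead rebuild the count from scratch via the identification of $\Id(\M_2(R);\p)$ with $\mathbb{P}^1(\F_q)$ (which is essentially the content of Theorem \ref{thm2}, so the identification you flag as the delicate step is already available in the paper) and an explicit eigenline count for upper-triangular matrices over $\F_q$. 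Both arguments hinge on the same two facts: the reductions of $\omega$ and $\gamma^{-1}\omega\gamma$ are upper triangular with the same diagonal entries up to order, hence share a split characteristic polynomial, and the removed ideal $\M_2(R)\gamma_1$ corresponds to the line $[0{:}1]$ fixed by every such matrix. What your version buys is independence from the external formula and a transparent case analysis (distinct eigenvalues, repeated eigenvalue non-scalar, scalar) that the Legendre symbol only encodes implicitly; in particular it makes visible exactly why the scalar case is the unique source of failure and why the non-scalar count never exceeds $1$ after removing $[0{:}1]$. What the paper's version buys is brevity.
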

\begin{proof}
By modifying the equation for the number of fixed points given by Cohn--Kumar \cite{CohnKumar} and Chari \cite{SC} to exclude the element $\M_2(R)\gamma_1$ (which must also be fixed under $\tau_\omega$ following Corollary \ref{rad}), if $\omega$ is not a scalar matrix $\pmod \p$, the number of fixed points of $\tau_\omega$ is given by the Legendre symbol $\left(\frac{\trd(\omega)^2-4\nrd(\omega)}{p}\right)$. Writing $\omega=\left(
\begin{array}{cc}
a & b\\
p^nc & d
\end{array} \right)$, we have $(a+d)^2-4ad-bp^nc \equiv (a-d)^2\pmod \p$, so $\left(\frac{\trd(\omega)^2-4\nrd(\omega)}{p}\right)=0$ or $1$, depending on whether $a \equiv d \pmod \p$ or not. But, since the trace and norm maps are invariant under conjugation (say, by $\gamma_n$), the expression is the same for $\omega$ and $\gamma^{-1}\omega\gamma$. 
\end{proof}

\begin{remark} See Example \ref{ex2} for a counterexample in the case where $\gamma^{-1}\omega\gamma$ is a scalar matrix $\pmod \p$.
\end{remark}

We now discuss the kernel of the map $\sigma$ in Corollary \ref{cor1}.

\begin{theorem}
The kernel of the map $\sigma$ is $$\ker(\sigma)=\left \{ \left(
\begin{array}{cc}
a+k_1p &bp\\
cp^{n+1} & a+k_2p
\end{array} \right):  a,b,c,k_1, k_2 \in R \right \}.$$
\end{theorem}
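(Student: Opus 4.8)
The plan is to compute $\ker(\sigma)$ by unwinding the diagram in Corollary~\ref{cor1}: since $\iota$ is an embedding, $\sigma_\omega = \id$ if and only if $(\tau \times \tau^{(\gamma)})(\omega) = (\id, \id)$, i.e. if and only if both $\tau_\omega = \id$ on $\Id(\M_2(R);\p)'$ and $\tau_{\gamma^{-1}\omega\gamma} = \id$ on $\Id(\M_2(R);\p)'$. By Theorem~\ref{thm2} (the maximal case with $m=2$), $\tau_\omega = \id$ exactly when the image $\rho(\omega)$ of $\omega$ in $\GL_2(\F_q)$ acts trivially on $\Pj^1(\F_q)$, which happens if and only if $\omega \bmod \p$ is a scalar matrix. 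Here, though, we are working in the Eichler order and $\tau_\omega$ is defined only on $\Id(\M_2(R);\p)'$, the set obtained by deleting the fixed ideal $\M_2(R)\gamma_1$ (corresponding to the vertex $[L_0\gamma_1]$); I should check that deleting this single point does not enlarge the kernel, i.e. that an element acting trivially on $\Pj^1(\F_q) \setminus \{\text{one point}\}$ already acts trivially on all of $\Pj^1(\F_q)$ — this holds as soon as $q \geq 3$, and the small cases $q=2$ can be handled by direct inspection (or subsumed, since a transposition on the remaining points would still be recorded as a nontrivial permutation unless the deleted point was the only moved one, which a fractional linear map cannot do).

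The first key step, then, is: $\tau_\omega = \id \iff \omega \equiv \left(\begin{smallmatrix} a & 0 \\ 0 & a \end{smallmatrix}\right) \pmod{\p}$ for some $a \in R$, i.e. writing $\omega = \left(\begin{smallmatrix} a+k_1 p & b p \\ c p^n & a + k_2 p \end{smallmatrix}\right)$ with $a, b, c, k_1, k_2 \in R$ (recall every element of $\Or$ has lower-left entry divisible by $p^n$). The second key step is to impose the additional condition $\tau_{\gamma^{-1}\omega\gamma} = \id$, i.e. $\gamma^{-1}\omega\gamma \equiv \left(\begin{smallmatrix} a' & 0 \\ 0 & a' \end{smallmatrix}\right) \pmod{\p}$. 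By Lemma~\ref{lem5}, an element $\omega \in \Or^\times$ that is already a scalar $\pmod{\p}$ conjugates to $\gamma^{-1}\omega\gamma \equiv \left(\begin{smallmatrix} a & c \\ 0 & a \end{smallmatrix}\right) \pmod{\p}$, where $c$ is the coefficient of $p^n$ in the lower-left entry of $\omega$. This is scalar $\pmod{\p}$ precisely when $c \equiv 0 \pmod{\p}$, i.e. when the lower-left entry of $\omega$ is divisible by $p^{n+1}$. Combining the two conditions gives exactly
$$\ker(\sigma) = \left\{ \left(\begin{array}{cc} a + k_1 p & b p \\ c p^{n+1} & a + k_2 p \end{array}\right) : a, b, c, k_1, k_2 \in R \right\},$$
subject to the implicit constraint that such a matrix lie in $\Or^\times$, i.e. that $a \in R^\times$ (so the reduced norm is a unit); this is automatic for the kernel elements under consideration and matches the stated form.

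The main obstacle I anticipate is the bookkeeping around the deleted ideal $\M_2(R)\gamma_1$ and the $q = 2$ edge case: I need to be sure that "$\tau_\omega$ acts trivially on $\Id(\M_2(R);\p)'$" genuinely forces "$\omega$ scalar $\pmod\p$" and not merely "$\omega$ fixes all but possibly the one deleted point," and likewise for $\tau_{\gamma^{-1}\omega\gamma}$ (where the deleted point gets replaced by a different vertex under conjugation — one should track which vertex and confirm the same conclusion). Once that is pinned down, everything else is the routine matrix computation of Lemma~\ref{lem5} together with reading off the divisibility conditions, and the description of $\ker(\sigma)$ falls out immediately. A clean way to present this is to first prove $\ker(\tau) = \{\omega : \omega \text{ scalar} \pmod\p\}$ (using Theorem~\ref{thm2}), then intersect with its $\gamma$-conjugate via Lemma~\ref{lem5}.
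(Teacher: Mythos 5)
Your proposal is correct and follows essentially the same route as the paper: reduce $\sigma_\omega=\id$ to the two conditions $\tau_\omega=\id$ and $\tau_{\gamma^{-1}\omega\gamma}=\id$, characterize each as ``scalar mod $\p$,'' and apply Lemma~\ref{lem5} to translate the second condition into divisibility of the lower-left entry by $p^{n+1}$. Your extra care about the deleted ideal $\M_2(R)\gamma_1$ is reasonable but resolves immediately (a bijection cannot move exactly one point, and that ideal is fixed by every $\omega\in\Or^\times$ anyway), so it does not constitute a genuinely different argument.
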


\begin{proof}
First, $\tau_\omega$ is the identity permutation if and only if $\omega \equiv \alpha \pmod \p$ is a scalar $\pmod \p$. Therefore, $\sigma_\omega$ is the identity permutation if and only if $\tau_\omega$ and $\tau_{\gamma^{-1}\omega\gamma}$ are both the identity permutation, if and only if $\omega$ and $\gamma^{-1}\omega\gamma$ are both scalar matrices $\pmod \p$. Now, by Lemma \ref{lem5}, 
this holds if and only if $\omega=\left(
\begin{array}{cc}
a+k_1p & bp\\
cp^{n+1} & a+k_2p
\end{array} \right)$ for $a,b, c, k_1, k_2 \in R$.\end{proof}

Finally, we discuss the image of the map $\sigma$ in Corollary \ref{cor1}.

\begin{theorem}
Let $\ell_1>1$ and $\ell_2>1$ denote the size of the cycles that aren't fixed points of $\sigma_{\omega}|_{S_1}$ and $\sigma_{\omega}|_{S_2}$, respectively. Then, if $q=\opchar(\F_q)$ is prime or if $\opchar(\F_2) \nmid \trd(\omega)^2-4\nrd(\omega)$, then $\ell_1 = \ell_2$. 
\end{theorem}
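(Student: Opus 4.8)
The plan is to reduce the claim to a comparison of multiplicative orders in $\PGL_2(\F_q)$. By Forsyth--Gurev--Shrima, $\ell_1$ is the multiplicative order of $\omega + \p$ in $\PGL_2(R/\p)$ and $\ell_2$ is the multiplicative order of $\gamma^{-1}\omega\gamma + \p$; by Corollary \ref{cor1} these govern the nontrivial cycle lengths of $\sigma_\omega|_{S_1}$ and $\sigma_\omega|_{S_2}$. So it suffices to show that, under either hypothesis, $\ol\omega$ and $\ol{\gamma^{-1}\omega\gamma}$ have the same order in $\PGL_2(\F_q)$. Write $\ol\omega = \sbmat{a & b \\ 0 & d}$ modulo $\p$ (since $p^n c \equiv 0$ with $n \geq 1$), and note $\ol{\gamma^{-1}\omega\gamma} = \sbmat{a & c' \\ 0 & d}$ for the relevant entry $c'$, by the computation in Lemma \ref{lem5} (done there for scalar $\omega$, but the upper-triangular shape and diagonal entries persist in general). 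Thus both images are upper-triangular with the \emph{same} diagonal $(a,d)$, hence the same characteristic polynomial, hence the same trace and determinant up to the ambiguity inherent in $\PGL_2$.

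**Case analysis.** I would split on whether the two diagonal entries agree mod $\p$. If $a \not\equiv d \pmod\p$, then an upper-triangular matrix with distinct eigenvalues $a, d$ is diagonalizable over $\F_q$, and its order in $\PGL_2(\F_q)$ depends only on the ratio $a/d \in \F_q^\times / (\F_q^\times)$ — wait, more precisely on the order of $a/d$ in $\F_q^\times$ modulo scalars; in any case this ratio is the same for both matrices since they share the same diagonal. So $\ell_1 = \ell_2$ with no extra hypothesis needed in this case. The interesting case is $a \equiv d \pmod\p$: then both matrices are, mod $\p$, of the form $a(\mathrm{Id} + N)$ with $N$ strictly upper-triangular, so in $\PGL_2(\F_q)$ they are either trivial (if $b \equiv 0$, resp.\ $c' \equiv 0$) or unipotent of order $\opchar(\F_q) = p$ (the characteristic). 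Here is where the hypotheses enter: the condition $\opchar(\F_q) \nmid \trd(\omega)^2 - 4\nrd(\omega)$ says exactly (by the computation $(a-d)^2 - bp^nc \equiv (a-d)^2 \pmod\p$ recorded in the proof of Lemma \ref{lem6}) that $a \not\equiv d$, so this hypothesis simply forces us into the first case. And if $q = \opchar(\F_q)$ is prime, then $\F_q = \F_p$ and a nonzero strictly-upper-triangular $N$ always gives order exactly $p$; I must then check that $b \equiv 0 \iff c' \equiv 0$, i.e.\ that $\ol\omega$ is scalar iff $\ol{\gamma^{-1}\omega\gamma}$ is scalar — which is precisely Lemma \ref{lem5} together with the observation that $c'$ is the coefficient of $p^n$ in the lower-left entry of $\omega$, so $b \equiv 0$ and $c' \equiv 0$ can genuinely differ.

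**The subtlety to resolve.** The apparent gap is exactly the scalar-vs-nonscalar mismatch: when $a \equiv d$, it is possible that one of $\ol\omega$, $\ol{\gamma^{-1}\omega\gamma}$ is scalar (order $1$) while the other is unipotent (order $p$), as Example \ref{ex2} shows. So the statement as phrased should be read with the standing assumption that we are in a case where both $S_1$ and $S_2$ have a nontrivial cycle at all — i.e.\ neither permutation is the identity, equivalently neither $\ol\omega$ nor $\ol{\gamma^{-1}\omega\gamma}$ is scalar mod $\p$ (otherwise $\ell_i$ is undefined). Under that reading, once both are nonscalar unipotents over $\F_q$ with $q$ prime, both have order $p$, giving $\ell_1 = p = \ell_2$; and under the Legendre-symbol hypothesis the case $a \equiv d$ does not arise. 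I expect the main obstacle to be stating this carefully: making explicit that $\ell_1, \ell_2$ are only defined when the respective restricted permutations are nontrivial, and then verifying cleanly that in the residually-unipotent case over a prime field the order is forced to be $p$ regardless of the (nonzero) nilpotent entry. The underlying linear algebra is routine; the care is entirely in the bookkeeping of when each order is well-defined and in citing Lemma \ref{lem5} and the trace-norm computation from Lemma \ref{lem6} for the dichotomy $a \equiv d$ vs.\ $a \not\equiv d$.
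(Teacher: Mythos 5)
Your argument is correct, and it takes a genuinely different---and in fact stronger---route than the paper's. The paper never compares the two reductions directly: it uses Lemma \ref{lem5} to get the divisibilities $\ell_2 \mid \opchar(\F_q)\,\ell_1$ and $\ell_1 \mid \opchar(\F_q)\,\ell_2$ (if $\omega^{\ell_1}$ is scalar mod $\p$, then $(\gamma^{-1}\omega\gamma)^{\ell_1}$ is scalar-plus-nilpotent mod $\p$, so its $\opchar(\F_q)$-th power is scalar), and then invokes the two hypotheses precisely to rule out a stray factor of $\opchar(\F_q)$: when $q$ is prime, via the bound $\ell_i\le q$; and when $\p \nmid \trd(\omega)^2-4\nrd(\omega)$, via $\ell_i \mid q-1$ together with $\gcd(\opchar(\F_q),q-1)=1$. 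You instead observe that $\omega$ and $\gamma^{-1}\omega\gamma$ reduce modulo $\p$ to upper-triangular matrices with the same multiset of diagonal entries (the diagonal is in fact swapped, $\gamma^{-1}\sbmat{a & b \\ p^n c & d}\gamma \equiv \sbmat{d & c \\ 0 & a} \pmod{\p}$, which is harmless) and read off the order in $\PGL_2(\F_q)$ directly from the eigenvalue data: the order of $a/d$ in $\F_q^\times$ when $a \not\equiv d$, and $\opchar(\F_q)$ when $a\equiv d$ and the reduction is nonscalar. What your approach buys is that the stated hypotheses become superfluous: $\ell_1=\ell_2$ holds whenever both are defined, for arbitrary $q$, since even in the unipotent case $(1+N)^k=1+kN$ has order $\opchar(\F_q)$ over any $\F_q$, not just a prime field, so your restriction to $q$ prime there is unnecessary. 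Your reading of the edge case---that the only genuine failure mode is one reduction being scalar while the other is not, as in Example \ref{ex2}, and that this is excluded by the standing hypothesis $\ell_1,\ell_2>1$---is also the correct interpretation of the statement.
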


\begin{proof} By Lemma \ref{lem5}, since $\omega^{\ell_1} \equiv a \pmod \p$ for some $a \in R$,  we also have ${\gamma^{-1}\omega\gamma}^{\opchar(\F_q)\ell_1}\equiv a \pmod \p$. Then, we must have $\ell_2 \mid \opchar(\F_q)\ell_1$. For the same reason, $\ell_1 \mid p\ell_2$. If $q=\opchar(\F_q)$ is prime, then $\ell_1 \leq \opchar(\F_q)$ and $\ell_2 \leq \opchar(\F_q)$ since the size of the cycles must be smaller than the set being permuted, so either $\ell_1=\ell_2=\opchar(\F_q)$ or $\ell_1 \mid \ell_2$ and $\ell_2 \mid \ell_1$ so equality holds.

If $\p \nmid (\trd(\omega)^2-4\nrd(\omega))$, then by Cohn--Kumar \cite{CohnKumar} and Chari \cite{SC}, $\tau_\omega$ has two fixed points, meaning there are $q-1$ remaining in each $\Id(\M_2(R);\p)$ (and hence in $S_1$ and $S_2$) to be permuted, so $\ell_1 \mid (q-1)$ and $\ell_2 \mid (q-1)$, so since $\gcd(\opchar(\F_q),q-1)=1$, we must have $\ell_1 \mid \ell_2$ and $\ell_2 \mid \ell_1$ so equality holds.

\end{proof}

\begin{remark}
The requirement that $\p \mid (\trd(\omega)^2-4\nrd(\omega))$ is equivalent to saying that the diagonal entries are equivalent $\pmod \p$.
\end{remark}

%

\end{document}